\newtheorem{theo}{Theorem}[section]
\newtheorem{cor}[theo]{Corollary}
\newtheorem{lemma}[theo]{Lemma}
\newtheorem{prop}[theo]{Proposition}
\newtheorem*{prop*}{Proposition}
\newtheoremstyle{definition}{2mm}{2mm}{}{}{\bfseries}{.}{.5em}{}
\theoremstyle{definition}
\newtheorem{remark}[theo]{Remark}
\newtheorem{ex}[theo]{Example}
\newcommand{\N}{\mathbb{N}}
\newcommand{\Z}{\mathbb{Z}}
\newcommand{\C}{\mathbb{C}}
\newcommand{\R}{\mathbb{R}}
\newcommand{\E}{\mathbb{E}}
\newcommand{\sF}{\mathcal{F}}
\newcommand{\sA}{\mathcal{A}}
\newcommand{\bE}{\mathop{\mathbb{E}}}
\newcommand{\norm}[1]{\left\lVert#1\right\rVert}
\newcommand{\ind}[1]{\mathbb{1}_{#1}}
\newcommand{\uclim}{\mathop{\textup{\textsf{UC-}}\lim}}
\newcommand{\gen}{\mathop{\textup{\textsf{gen}}}}
 \newcommand{\gec}{\text{\footnotesize~\rotatebox[origin=c]{270}{$\triangleplus$}~}}
\titlespacing*{\section}{0pt}{3.5ex plus 0ex minus 0ex}{1.5ex plus 0ex}
\titlespacing*{\subsection}{0pt}{3.5ex plus 0ex minus 0ex}{1.5ex plus 0ex}
\titlespacing*{\subsubsection}{0pt}{3.5ex plus 0ex minus 0ex}{1.5ex plus 0ex}
\begin{document}
\title{Infinite linear patterns in sets of positive density}
\author{Felipe Hernández}
\date{\small \today}
{\bf \Large \maketitle}
\begin{abstract}
In this article we describe all possible infinite linear configurations that can be found in a shift of any set of positive upper Banach density. This simultaneously generalizes Szemerédi's theorem on arithmetic progressions and the recent density finite sums theorem of Kra, Moreira, Richter, and Robertson.
\end{abstract}

\section{Introduction}
Let $d\in \N$. Given sets $B_1,\ldots,B_d\subseteq \N$ and integers $w_1,\ldots,w_d\in \Z$, we define the ordered linear configuration
\begin{equation}\label{OLC}
    w_1B_1\gec \cdots \gec w_dB_d:= \{ w_1b_1+ \cdots +w_db_d \mid \text{ for each } i\in \{0,\ldots,d\},b_i\in B_i, \text{ and } b_1>\cdots>b_d\}.
\end{equation}

In \cite{kmrr25} Kra, Moreira, Richter, and Robertson established the following result.
\begin{theo}[{\cite[2025]{kmrr25}}]\label{conj-KMRR25}
If $A\subseteq \N$ has positive upper Banach density then for every $d\in \N$ there is an integer $t\geq 0$ and an infinite set $B \subseteq \N$ such that 
$$B ,B\gec B,\ldots, \underbrace{B\gec \cdots \gec B}_{d\text{ times}} \subseteq A-t. $$
\end{theo}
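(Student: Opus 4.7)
The natural approach is via the Furstenberg correspondence principle. From $A\subseteq\N$ of positive upper Banach density one extracts a measure-preserving system $(X,\mathcal{B},\mu,T)$, a set $E\in\mathcal{B}$ with $\mu(E)>0$, and a point $x_0\in X$ whose orbit generically visits $E$ at times in $A$. The theorem then reduces to producing an infinite set $B=\{b_1<b_2<\cdots\}\subseteq\N$ and a point $y\in X$ such that $T^{b_{i_1}+\cdots+b_{i_k}}y\in E$ for every descending tuple $i_1>\cdots>i_k$ with $k\leq d$; the shift $t$ is recovered at the end by locating a return of the orbit of $x_0$ close to $y$ via Birkhoff, converting the dynamical recurrence into the desired inclusion in $A-t$.

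I would construct $B$ inductively, maintaining a nested family of positive-measure sets $E_0\supseteq E_1\supseteq\cdots$ defined by
\[
E_n \;=\; E \cap \bigcap_{\substack{\emptyset\neq S\subseteq\{b_1,\ldots,b_n\}\\ |S|\leq d}} T^{-\sigma(S)} E,
\]
where $\sigma(S)$ denotes the sum of the elements of $S$. Adding $b_{n+1}>b_n$ amounts to finding a single integer that simultaneously certifies $\mu(E_n\cap T^{-b_{n+1}}F_n)>0$, where $F_n$ is the analogous intersection of shifts at "level" $d-1$. Unpacking, this is a joint multiple-recurrence requirement: one integer $b_{n+1}$ must witness roughly $2^{d}$ overlapping Khintchine-type intersections of shifts of $E$.

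The main obstacle is exactly this simultaneity, together with the need for a uniform lower bound on $\mu(E_n)$ so that $E_\infty:=\bigcap_n E_n$ retains positive measure. Each individual recurrence condition is known to have infinitely many solutions, but an arbitrary choice could drive $\mu(E_n)$ to zero. I would resolve this by invoking the Host--Kra structure theorem to reduce the relevant multicorrelation averages to their characteristic nilpotent factor, and then exploit the equidistribution theory of polynomial orbits on nilmanifolds (in the tradition of Leibman and Green--Tao--Ziegler) to show that the set of "good" $b_{n+1}$ has positive density, with uniform measure bounds under a properly chosen inductive hypothesis. Finally, applying Birkhoff's ergodic theorem to a generic $y\in E_\infty$ yields a shift $t$ along the orbit of $x_0$ such that $B\gec\cdots\gec B\subseteq A-t$, completing the argument. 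The subtle point, and where the real work of \cite{kmrr25} lies, is identifying the correct inductive invariant on the nilfactor side so that the measure does not degenerate across the countably many inductive steps.
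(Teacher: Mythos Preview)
Your outline tracks the broad strategy of \cite{kmrr25} and of this paper---Furstenberg correspondence, an inductive construction of $B$, and a reduction to the Host--Kra nilfactor---but the two concrete mechanisms you propose would not work as stated.

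First, the inductive invariant is not $\mu(E_n)\geq\delta$ for nested sets $E_n\subseteq X$. The paper instead builds a \emph{progressive measure} $\sigma$ on the product $X^{|\mathcal{W}_d|}$ (the lift, through the disintegration over the pronilfactor, of the unique invariant measure on a diagonal nilorbit) and carries only the qualitative hypothesis $\sigma\big(\prod_w U_w\big)>0$ for suitable open product sets; no uniform lower bound is needed or obtained. The engine is \cref{Lemma-new-method}: positivity of $\int F\,d\sigma$ forces a positive liminf of an average that simultaneously places $T^{kn}a$ in the required intersection and keeps the next product set of positive $\sigma$-measure. That lemma \emph{is} the ``correct inductive invariant on the nilfactor side'' you gesture at but do not supply. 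Second, your endgame---build $B$, pick $y\in E_\infty$, then locate $t$ with $T^t x_0$ close to $y$---is in the wrong order and has a real gap: $E_\infty$ encodes infinitely many constraints $T^{\sigma(S)}y\in E$, and no single approximating $t$ can certify them all (genericity of $x_0$ is only against continuous test functions, not against $\ind{E_\infty}$). The paper instead fixes the transversal point $a$ \emph{before} the induction, finds the shift $t$ at the outset via \cref{positive-sigma-measure}, and at each step chooses $b(n{+}1)$ so that $T^{kb(n{+}1)}a$ itself lies in the needed finite intersection; membership in $A-t$ is therefore certified directly at every stage, and no limiting set $E_\infty$ is ever formed.
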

\cref{conj-KMRR25} is a follow-up to the main result in \cite{Kra_Moreira_Richter_Robertson:2023}, where the authors proved the case $d=2$ of \cref{conj-KMRR25}, resolving a long-standing conjecture of Erd{\H o}s.

In this paper we generalize \cref{conj-KMRR25} by replacing the sumset configurations with the more general ordered linear configurations defined in \cref{OLC}. This resolves \cite[Question 3.11]{kmrr2023problemsinfinitesumsetconfigurations}. 
\begin{theo}\label{Conjecture-1}
Let $d,r,k\in \N$. If $A\subseteq \N$ has positive upper Banach density, then there exist an integer $t\geq0$ and an infinite set $B\subseteq \N$ satisfying
$$kB\gec w_1B \gec \cdots \gec w_dB\subseteq A-t  $$
for all $w_1,\ldots,w_d \in \{-r,\ldots,r\}$ such that $k+\sum_{i=1}^j w_i\neq 0$ for all $j\in \{1,\ldots,d\}$.
\end{theo}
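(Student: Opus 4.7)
The plan is to adapt the ergodic-theoretic framework of Kra, Moreira, Richter, and Robertson to the broader class of linear patterns considered here. Via the Furstenberg correspondence principle, I would first replace $A$ by a measure-preserving system $(X, T, \mu)$ and a set $E \subseteq X$ with $\mu(E) > 0$, so that the problem reduces to finding a point $x_0 \in X$, an integer $t \geq 0$, and an infinite $B \subseteq \N$ with
$$T^{t + kb_0 + w_1 b_1 + \cdots + w_d b_d} x_0 \in E$$
for every decreasing tuple $b_0 > \cdots > b_d$ in $B$ and every admissible coefficient tuple $(w_1, \ldots, w_d)$.

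Next, I would build $B = \{b_1 < b_2 < \cdots\}$ recursively, arranging that at each step the set of valid extensions has positive upper density. After $b_1, \ldots, b_n$ have been chosen, define a constraint set $E_n \subseteq X$ to be the set of points $y$ satisfying $T^{kb + c} y \in E$ for every shift $c$ arising from a lower-index pattern among $\{b_1, \ldots, b_n\}$ (over all admissible coefficients), and then seek $b = b_{n+1}$ such that $T^{kb} x_0 \in E_n$. The key algebraic identity
$$kb_0 + \sum_{i=1}^d w_i b_i = v_d b_d + \sum_{j=0}^{d-1} v_j (b_j - b_{j+1}), \qquad v_j := k + \sum_{i=1}^j w_i,$$
rewrites each pattern as a linear combination with \emph{all nonzero} coefficients $v_0, \ldots, v_d$ applied to the smallest element $b_d$ and the consecutive positive differences $b_j - b_{j+1}$. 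The partial-sum hypothesis is precisely the non-vanishing of these $v_j$, which rules out the cancellations that would otherwise trivialize a multiple recurrence argument and aligns the pattern with the kinds of configurations known to satisfy $L^2$-recurrence along seminorm-characteristic factors.

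The main obstacle will be a multiple recurrence theorem strong enough to drive the induction: for any growing finite family of shifts $\{c_\alpha\}$ and the fixed leading coefficient $k$, the set of $b$ with $T^{kb + c_\alpha} x_0 \in E$ for all $\alpha$ must have positive upper density. A single pattern is covered by Furstenberg--Katznelson, but uniformity across the combinatorially exploding family of lower-index patterns is delicate. I expect the argument will require a characteristic-factor analysis in the spirit of Host--Kra, together with a van der Corput-type induction that exploits the non-vanishing of each $v_j$ level by level, and a compactness or diagonalization step to extract a single shift $t$ and base point $x_0$ that work throughout the construction. A secondary subtlety is that the leading coefficient $k$ may force passage to a residue class mod $k$ (and a corresponding adjustment of $t$) to ensure the shifted set has positive density there, which must be done once at the outset rather than adaptively.
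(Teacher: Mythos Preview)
Your high-level outline---Furstenberg correspondence, a recursive construction of $B$, reduction to Host--Kra pronilfactors, and the observation that the leading coefficient $k$ forces one to work along a $T^k$-ergodic component from the outset---matches the paper's strategy. The real gap is the inductive engine. You propose to track a constraint set $E_n\subseteq X$ and then find $b_{n+1}$ with $T^{kb_{n+1}}x_0\in E_n$, but you give no mechanism to guarantee that the \emph{next} constraint set $E_{n+1}$ (which now involves exponentially many new shifts coming from all admissible words of length one more) is still visited by the orbit with positive frequency. Appealing to ``Furstenberg--Katznelson plus uniformity'' is not enough: the shifts $c_\alpha$ you accumulate are not of the form handled by a single multiple-recurrence theorem, and a van~der~Corput scheme applied directly to the point $x_0$ does not see the simultaneous constraints indexed by the whole tree $\mathcal{W}_d$ of admissible words.

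The paper resolves this by lifting the induction to the product space $X^{|\mathcal{W}_d|}$ and introducing a \emph{progressive measure} $\sigma$ there, built from the Haar measure on the $\vec T$-orbit closure of the diagonal point in the pronilfactor and then lifted fiberwise. The invariant maintained is not ``$E_n$ has positive density along the orbit'' but rather ``a certain product of open sets has positive $\sigma$-measure''. The key technical lemma (\cref{Lemma-new-method}) shows that this $\sigma$-positivity is self-propagating: if $\sigma(\prod_w U_w)>0$, then for a positive-density set of $n$ one has both $T^{kn}a\in U_\varepsilon$ and $\sigma$ of the refined product (intersected with the $T^{-jn}$-preimages dictated by extending each word by one letter) still positive. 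That lemma, in turn, is proved by reducing to the pronilfactor via the $U^{s}$-seminorm bounds and then invoking a \emph{uniform} Szemer\'edi theorem on the nilsequence values---not a direct Furstenberg--Katznelson recurrence. Your telescoping identity with the partial sums $v_j$ correctly isolates why the non-vanishing hypothesis matters, but in the actual proof its role is to make the diagonal transformation $\vec T=(T^{v_{|w|}})_{w}$ genuinely act in every coordinate, so that the orbit closure in the pronilfactor is a nilmanifold on which unique ergodicity and Szemer\'edi can be applied.
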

The case of \cref{Conjecture-1} regarding sumsets of the form $k B \gec w_1 B$ with $k,w_1\in \N$ was proved in \cite{kousek2025} using different techniques.  

For a set $B\subseteq \N$ and $d\in \N$ we write $B^{\otimes d}:= \{(b_1,\ldots,b_d): b_1>\cdots >b_d \in B \}$. For $j\in \{1,\ldots,d\}$ denote by $e_j$ the $j$-canonical vector of $\R^d$. We now restate \cref{Conjecture-1} in terms of linear forms, which makes the conditions over the admissible patterns more clear. 
\begin{theo}[Reformulation of \cref{Conjecture-1} using linear forms]\label{restatement-main-theorem}
    Let $m\in \N$ and let $\psi_1,\ldots,\psi_m:\Z^d\to \Z$ be non-constant linear forms such that 
    \begin{enumerate}
        \item\label{C1} $\psi_1(e_1)=\cdots= \psi_m(e_1) \in \N$, and
        \item\label{C2} for each $i\in\{1,\ldots,m\}$ and $j\in \{1,\ldots, d\}$, $\psi_i(e_1+\cdots+ e_j)\neq 0$.
    \end{enumerate}
    Then, for any $A\subseteq \N$ with positive upper Banach density there are an infinite set $B\subseteq \N$ and $t\in \N\cup\{0\}$ such that $\psi_1(B^{\otimes d}),\ldots, \psi_m(B^{\otimes d})\subseteq A-t$.
\end{theo}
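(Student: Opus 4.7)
The strategy is to construct $B = \{b_1 < b_2 < \cdots\}$ one element at a time, translating the combinatorial problem into dynamics through the Furstenberg correspondence principle. I would first pass from $A\subseteq\N$ of positive upper Banach density to a measure-preserving system $(X,\mathcal{B},\mu,T)$, a set $E\subseteq X$ with $\mu(E)>0$, and a point $x_0\in X$, so that the requirement $\psi_i(B^{\otimes d})\subseteq A-t$ becomes the dynamical statement that $T^{\psi_i(\mathbf{b})}x\in E$ for every admissible $\mathbf{b}$ and every $i$, where $x = T^t x_0$.

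For the inductive step, suppose $b_1<\cdots<b_n$ have been chosen so that every $\psi_i$-pattern supported on $\{b_1,\ldots,b_n\}$ lies in $A-t$. Writing $\psi_i = k X_1 + \sum_{l=2}^d c_{i,l} X_l$, where by condition~\ref{C1} the integer $k\in\N$ is the same for all $i$, picking $b_{n+1}>b_n$ amounts to finding infinitely many integers $b$ with
$$k b + \sum_{l=1}^{d-1} c_{i,l+1}\, b_{j_l} \in A - t \quad\text{for every } i\in\{1,\ldots,m\} \text{ and } j_1>\cdots>j_{d-1} \text{ in } \{1,\ldots,n\}.$$
After a change of variables, this reduces to showing that the ergodic average
$$\frac{1}{N}\sum_{b=1}^{N}\prod_{i,\,(j_1>\cdots>j_{d-1})} T^{\psi_i(b,b_{j_1},\ldots,b_{j_{d-1}})-k b}\,\ind{E}\bigl(T^{k b}x_0\bigr)$$
has a $\liminf$ bounded below by a positive constant depending only on $\mu(E)$, so that the set of valid $b$'s has positive upper density; a Hindman- or ultrafilter-type extraction then selects $b_{n+1}$ while preserving a positivity invariant for the next step.

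The technical core is therefore an $L^2$-convergence and uniform-positivity statement for multilinear averages of the shape $\frac{1}{N}\sum_{n=1}^N\prod_{i=1}^m T^{\psi_i(n,n_1,\ldots,n_{d-1})}\ind{E}$, where the auxiliary parameters $n_1,\ldots,n_{d-1}$ are fixed through nested $\uclim$'s along a sequence growing fast enough to decouple the characteristic factors. Condition~\ref{C1} lets me extract a common factor $T^{kn}$, after which each remaining form depends on strictly fewer variables of $n$, enabling a double induction on $(d,m)$. Condition~\ref{C2} ensures that every ``collapsed'' form $\psi_i(e_1+\cdots+e_j)$ is nonzero, keeping the relevant Host--Kra characteristic factors non-degenerate and preventing the limit from being orthogonal to $\ind{E}$. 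I expect the main obstacle to be establishing this uniform positivity after all nested limits have been taken: the simultaneous presence of several forms with possibly negative coefficients obstructs any direct reduction to \cite{kmrr25}, and will require a PET- or van der Corput-type reduction tailored to the combinatorics imposed by \ref{C1}--\ref{C2} to identify the correct characteristic factor and extract a lower bound independent of the fixed auxiliary parameters.
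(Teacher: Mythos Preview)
Your overall architecture---Furstenberg correspondence, inductive construction of $B$, reduction to Host--Kra factors---matches the paper's, but the crucial middle piece is missing. The inductive invariant you gesture at (``preserving a positivity invariant for the next step'') is precisely where the real work lies, and the paper supplies it concretely: one defines a \emph{progressive measure} $\sigma$ on the product $X^{|\mathcal{W}_d|}$ (indexed by all admissible words simultaneously), obtained by lifting a natural $\vec T$-invariant measure $\tilde\sigma$ from the pronilfactor, and the invariant carried through the induction is that a specific nested intersection of preimages of $E$ has positive $\sigma$-measure. The key \cref{Lemma-new-method} then asserts that whenever such a product set has positive $\sigma$-measure, for a positive-density set of $n$ one has both $T^{kn}a$ landing where it should \emph{and} the refined intersection (with $b_{n+1}$ inserted) still of positive $\sigma$-measure. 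This closes the induction in one stroke; no Hindman-type or ultrafilter extraction is needed, and nothing like it would substitute for the measure $\sigma$.

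The route you propose---nested $\uclim$'s decoupling characteristic factors, a double induction on $(d,m)$, and a PET/van~der~Corput reduction---is not how the argument goes and does not obviously lead anywhere. The paper never inducts on $(d,m)$: all linear forms are handled at once by working in $X^{|\mathcal{W}_d|}$. Uniform positivity on the pronilfactor (\cref{Theo-in-pronil}) is obtained not through PET but by using the unique ergodicity of the orbit closure of $\pi(a)^{\mathcal{W}_d}$ under the joint nilrotation, which rewrites the $\tilde\sigma$-integral as a pointwise limit along the single orbit of $\pi(a)$; the uniform Szemer\'edi theorem (\cref{Uniform-Szemeredi}) then furnishes a density of parameters along which \emph{every} coordinate is simultaneously large, with a lower bound depending only on the initial $\eta$. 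Condition~\ref{C2} enters exactly here, ensuring the exponents $k+\sum_{i\le j}w_i$ are nonzero so that the arithmetic progression reaches each coordinate. Your sketch lacks both the object ($\sigma$) that makes the induction close and the mechanism (unique ergodicity plus uniform Szemer\'edi) that produces uniform positivity; the suggested double induction on $(d,m)$ has no evident reduction step in this problem.
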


Choosing $d=m\in \N$ and $\psi_1(x)=x_1$, $\psi_2(x)=x_1+x_2,\ldots, \psi_d(x)=x_1+\cdots+x_d$ we recover \cref{conj-KMRR25}. On the other hand, choosing $d=2$, $m\in \N$, and $\psi_1(x,y)=x$, $\psi_2(x,y)=x+y, \ldots, \psi_m(x,y)=x+(m-1)y$ we see that our result implies Szemerédi's theorem on arithmetic progressions \cite{Szemeredi75}. We emphasize that our methodology is similar to that of \cite{kmrr25}, and that we do not provide a new proof of Szemerédi's theorem, since we use it as a black box within our approach.

Conditions \ref{C1} and \ref{C2} in \cref{restatement-main-theorem} over the linear forms $\psi_1,\ldots,\psi_m:\Z^d\to \Z$ are not only sufficient, but also necessary. Condition \ref{C1} in \cref{restatement-main-theorem} was already highlighted in \cite[Example 2.3]{kmrr2023problemsinfinitesumsetconfigurations} where it is shown that there is a set $A\subseteq \N$ with positive upper 
 Banach density  such that for every $t\in \Z$ and infinite set $B\subseteq \N$, the set $B\gec B \cup 2B\gec B$ is not contained in $A-t$. This example can be slightly modified to construct, for any given pair of non-constant linear forms $\psi_1,\psi_2:\Z^d \to \Z$ with $\psi_1(e_1), \psi_2(e_2)\in \N$ distinct, a set $A$ with positive upper Banach density such that no shift of $A$ contains $\psi_1(B^{\otimes d} )$ and $\psi_2(B^{\otimes d})$ for any infinite set $B\subseteq \N$.
 \begin{ex}\label{ex1}
    Let $\psi_1,\psi_2:\Z^d\to \Z$ be non-constant linear forms such that $\psi_1(e_1)>\psi_2(e_1)>0$. Let $q=\psi_1(e_1)/\psi_2(e_1)$ and $c\in (1, q)$. Define $A=\N \cap \bigcup_{n\in \N }[ q^{2n},cq^{2n})$. Then, no shift of the set $A$ contains simultaneously the configurations $\psi_1(B^{\otimes d})$ and $\psi_2(B^{\otimes d})$ for an infinite set $B$. Indeed, assume by contradiction that $t\in \N_0$ is such that $\psi_1(B^{\otimes d}),\psi_2(B^{\otimes d})\subseteq A-t$ for an infinite set $B$. If $b_2,\ldots,b_d\in B$, then we can find $b\in B$ large enough such that for some $n\in \N$, $\psi_2(b,b_2,\ldots,b_d)+t$ lies in $[q^{2n},cq^{2n})$ with $\psi_1(0,b_2,\ldots,b_d)+t, \psi_2(0,b_2,\ldots, b_d)+t \in [-(q-c)q^{2n-1}/2,(q-c)q^{2n-1}/2]$. Then 
     $$ \psi_1(b,b_2,\ldots,b_d)+t \leq q\left(\psi_2(b,b_2,\ldots,b_d)+t\right) +(q-c)q^{2n-1}<q^{2(n+1)}
     $$
     and
     $$\psi_1(b,b_2,\ldots,b_d)+t\geq - (q-c)q^{2n} + q\left(\psi_2(b,b_2,\ldots,b_d)+t\right)\geq c q^{2n},$$
     which is a contradiction.
 \end{ex}

 Condition \ref{C2} in \cref{restatement-main-theorem} was already illustrated in \cite[cf. Theorem 2.20]{Beiglbock_Bergelson_Hindman_Strauss06} (see also \cite[Theorem 11.6]{Hindman1979}) where a construction from Ernst Straus is used to show that there is a set of positive upper Banach density $A$ such that no shift of $A$ contains $B-B$ for an infinite set $B\subseteq \N$. This example was also used in \cite{Kra_Moreira_Richter_Robertson:2023,kmrr2023problemsinfinitesumsetconfigurations,ackelsberg2024} to produce families of counterexamples concerning questions about sumsets in the setting of more general abelian groups. We use the same example to prove that for any non-constant linear form $\psi:\Z^d\to \Z$ with $\psi(e_1+\cdots+e_j)=0$ for some $j\in \{1,\ldots,d\}$, there is a set $A\subseteq \N$ with positive upper Banach density such that no shift of $A$ contains $\psi(B^{\otimes d})$ for an infinite set $B\subseteq \N$.
\begin{ex}\label{Straus-Example}
    Let $\eta\in (0,1)$. Consider a sequence $(k_n)_{n\in \N}$ of natural numbers such that $\sum_{n\in \N} k_n^{-1}<\eta$, and define $A=\N \setminus \left( \bigcup_{n\in \N} k_n\N + n\right)$. Then the upper Banach density of $A$ is at least $1-\eta$. For the sake of contradiction, assume there are a non-constant linear form $\psi:\Z^{d}\to \Z$ and $j\in \{1,\ldots,d\}$ such that $\psi(e_1+\cdots +e_j)=0$, and there is an increasing sequence $B=(b_n)_{n\in \N}\subseteq \N$ satisfying $\psi(B^{\otimes d})\subseteq A-t$ for some $t\in \N$. Without loss of generality $j=d$, as we can replace $t$ by $t':=t+\psi(0,\ldots,0,b_{d-(j-1)},\ldots,b_1)$, yielding for $B'=(b_{i})_{i\geq j}$ that $\psi'(B'^{\otimes j })\subseteq A-t',$ where $\psi'(x_1,\ldots,x_{j})= \psi(x_1,\ldots,x_{j},0, \ldots,0) $.
    By pigeonhole principle, we know that there is an infinite subset $C\subseteq B$ and $l\in \{0,\ldots, k_t\}$ such that $c\equiv l \text{ mod }k_t$ for each $c\in C$. Thus, for any $c_1>\cdots >c_d $ in $C$, $ \psi(c_1,\ldots,c_d)+t\in k_t\Z +t. $ Moreover, if we take $c_1$ large enough, we can ensure $\psi(c_1,\ldots,c_d)+t\in k_t\N +t$,
    implying that $A$ intersects $k_t\N+ t$, which is a contradiction.
\end{ex}

 It is also worth mentioning that it is not possible to remove the restriction $b_1>\cdots >b_d$ in \cref{Conjecture-1}, see \cite[Example 2.3]{kmrr2023problemsinfinitesumsetconfigurations}. Variants of these problems without this restriction were studied in \cite{kr24} for $\N$ and in \cite{ckmr25} for general abelian groups.

The paper is organized as follows. \cref{sec2} contains all basic notions from ergodic theory that we will use. \cref{sec3} focuses on defining a special measure playing a central role in our proof of \cref{Conjecture-1}. The key property of this measure is captured in \cref{Lemma-new-method}, which is the main technical result of this paper. \cref{sec4} is dedicated to proving \cref{Conjecture-1} assuming \cref{Lemma-new-method}. Finally, \cref{sec5} is devoted to demonstrating \cref{Lemma-new-method}. This is done by reducing the study of the ergodic average in \cref{Lemma-new-method} in the original system to the study of the respective ergodic average in the maximal pronilfactor of the system. The fact that nilsystems are uniquely ergodic allows to rewrite the aforementioned ergodic average, allowing the use of a uniform version of Szemerédi's theorem (see \cref{Uniform-Szemeredi}) to conclude the proof.

\subsection*{Acknowledgments}
The author is grateful to Florian Richter for his valuable guidance and numerous helpful comments on this article. We also thank Ethan Ackelsberg for fruitful discussions and for pointing out \cref{Straus-Example}, and Bryna Kra for many comments on an early version of the article. We thank the anonymous referee for helpful suggestions and comments.

\section{Preliminaries}\label{sec2}

\subsection{Notation}
For $s\in \N$ we denote $[s]:=\{1,\ldots,s\}$. For a sequence $(x_n)_{n\in \N}$ we define its \textit{uniform Cesàro limit} as   
$$\uclim_{n\to \infty} x_n :=\lim_{N-M\to \infty} \frac{1}{N-M}\sum_{n=M+1}^N x_n$$
whenever this limit exists. A F\o lner sequence $(\Phi_N)_{N\in\N}$ in $\N$ is a sequence of subsets of $\N$ such that 
$$ \lim_{N\to \infty} \frac{|\Phi_N \Delta (\Phi_N+k)|}{|\Phi_N|}=0 \text{ for all }k\in \N.$$
We say that a set $A\subseteq \N$ has positive upper Banach density if there is a F\o lner sequence $(\Phi_N)_{N\in \N}$ such that 
$$ \lim_{N\to \infty} \frac{|A \cap \Phi_N|}{|\Phi_N|}>0.$$
For the rest of the article, we fix $k\in \N$. We will use the alphabet $\sA=\{-r,\ldots,-1,1 \ldots,r\}$ and we will denote all the words of length $d$ or less\footnote{The set $\mathcal{A}^0$ will denote the singleton that contains the empty word $\varepsilon$.} in this alphabet as $\sA^{\leq d}= \sA^0 \cup \sA^1\cup\cdots\cup \sA^{d}$. We define the set of forbidden words $\mathcal{F}$ in $\mathcal{A}^{\leq d}$ as the set of $w\in \sA^{\leq d}$ such that for some $j\in \{1,\ldots,d\}$, $k+\sum_{i=1}^j w_i=0$. These words represent the forbidden configurations associated to \cref{Straus-Example}. We also define our set of admissible words $\mathcal{W}_d$ as $\sA^{\leq d}\setminus \sF$. In addition, we denote the set of nonempty words in $\mathcal{W}_d$ as $\mathcal{W}_d^{+}=\mathcal{W}_d\setminus\{\varepsilon\}$. For each $w\in \mathcal{W}_d^{+}$ we denote by $w^-$ the word $w$ without its last letter.

We define the order $\prec$ of the elements in $\mathcal{W}_d$ by increasing length and lexicographic order $\leq_{\text{lex}}$ for words of the same length, i.e. for $w,w'\in \mathcal{W}_d$ we have
\begin{equation*}
    w\prec w' \iff (|w|<|w'|) \lor (|w|=|w'| \land w\leq_{\text{lex}} w'). 
\end{equation*}

Let $S$ be a set. For a point $s\in S$ and a finite set $\Gamma\subseteq \N$ we will denote $s^{\Gamma}=(s,\ldots,s) $ the vector of length $|\Gamma|$ with $s$ in each coordinate. Likewise, for a transformation $F:S\to S$ over $S$, we will denote $F^{\Gamma}:S^{|\Gamma|}\to S^{|\Gamma|}$ the application of $F$ coordinatewise to each $\Vec{s}\in S^{|\Gamma|}$.
\subsection{Basic notions in Ergodic Theory}
Let $(X,T)$ be a \textit{topological dynamical system}, this is, a compact metric space $X$ with a homeomorphism $T:X\to X$ thereof. If, in addition, we have a $T$-invariant Borel probability measure $\mu$ on $X$, then—following the notation from \cite{Kra_Moreira_Richter_Robertson:2022}—we abuse language and refer to the triple $(X, \mu, T)$ as a \textit{system}. We say that $(X,\mu,T)$ is \textit{ergodic} if any $T$-invariant Borel set has measure either $0$ or $1$. A point $y\in X$ is called \textit{generic for $\mu$ along a F\o lner sequence $\Phi$}, which we denote by $y\in \gen_T(\mu,\Phi)$, if for each continuous function $f\in C(X)$
$$\int fd\mu =\lim_{N\to \infty} \frac{1}{|\Phi_N|} \sum_{n\in \Phi_N} f(T^ny). $$

A system $(Y,\nu,S)$ is a measurable factor of a system $(X,\mu,T)$ if there is a measurable map $\pi:X\to Y$ such that $\pi\mu=\nu$ and $S\circ \pi(x)= \pi\circ T(x)$ for $\mu$-a.e. $x\in X$. 
For a factor map $\pi:(X,\mu,T)\to (Y,\nu,S)$ there are measures $(\mu_y)_{y\in Y}$ defined $\nu$-a.e. on $X$ called the \textit{desintegration of $\mu$ with respect $\pi$} (see for example \cite[Theorem 5.14]{Einsiedler2011}) such that for each bounded and measurable function $f:X\to \C$ the map
    $$y\in Y \mapsto \int f d\mu_y $$
    is $\nu$-a.e. defined and Borel measurable, satisfying that all $V\in \mathcal{B}(Y)$, 
     $$\int_{\pi^{-1}(V)} f d\mu= \int_V \left( \int f d\mu_y\right) d\nu(y),$$
    and that for $\nu$-a.e. $y\in Y$, $\mu_{y}(\pi^{-1}(\{y\}))=1$. 

Let $\{(X_i,\mu_i,T_i)\}_{i\in \N}$ be a sequence of systems with continuous factor maps $\pi_i:X_i\to X_{i-1}$ between them. The \textit{inverse limit} $(X,\mu,T)$ of $\{(X_i,\mu_i,T_i)\}_{i\in \N}$ is the unique system such that there exist continuous factor maps $\psi_i:X\to X_i$ such that $\pi_i\circ \psi_i= \psi_{i-1}$ for each $i>1$, and such that 
$$C(X)=\overline{\bigcup_{i\in \N} C(X_i)\circ \psi_i}. $$

From this section onwards we will need the theory of pronilfactors. As we will only briefly recall such theory, we advise any interested reader to consult \cite{Host_Kra_nilpotent_structures_ergodic_theory:2018} for further details. An $s$\textit{-step nilmanifold} 
$X=G/\Gamma$ is a compact nilmanifold where $G$ is an $s$-step nilpotent Lie group and $\Gamma$ is a discrete cocompact subgroup of $G$. The group $G$ acts on $X$ by left translation and 
$\mu$ denotes the \textit{Haar measure} on $X$, this is, the unique Borel probability measure on $X$, which is invariant under this action. For a fixed element $\tau\in G$, we define the transformation $T:X\to X$ by left translation of $\tau$. Then, the triple $(X=G/\Gamma,\mu,T)$ constitutes an $s$\textit{-step nilsystem}.
An inverse limit of s-step nilsystems is called an s-step pronilsystem.

For a system $(X,\mu,T)$, we define the uniformity seminorms introduced in the ergodic case in \cite{Host_Kra_nonconventional_averages_nilmanifolds:2005} and then in the nonergodic case in \cite{CFH11}. Let $f\in L^\infty(X)$. For $s=0$
$$\norm{f}_{U^0(X,T,\mu)}:=\int_X f d\mu.$$
For $s\geq 0$ we define 
\begin{equation}\label{eq-US}
   \norm{f}_{U^{s+1}(X,\mu,T)}:=\lim_{H\to \infty} \left( \frac{1}{H}\sum_{h=0}^{H-1} \norm{T^h f \cdot \overline{f}}_{U^{s}(X,\mu,T)}^{2^s} \right)^{1/2^{s+1}}. 
\end{equation}
In \cite{Host_Kra_nonconventional_averages_nilmanifolds:2005} is proved that the limit in \cref{eq-US} always exists and that for $s\geq 1$, $\norm{\cdot}_{U^s(X,\mu,T)}$ defines a seminorm on $L^\infty(\mu)$. If in addition $(X,\mu,T)$ is ergodic, then the main result in \cite{Host_Kra_nonconventional_averages_nilmanifolds:2005} states that there is a factor $(Z,m,T)$, which is the maximal factor that is isomorphic to an s-step pronilsystem, such that for any $f\in L^\infty(X)$ we have 
$$\norm{f}_{U^{s+1}(X,\mu,T)}=0 \iff \E(f |Z)=0. $$
The factor $(Z,m,T)$ will be called the \textit{$s$-step pronilfactor of} $(X,\mu,T)$.

\section{The progressive measure}\label{sec3}

For a system $(X,\mu,T)$, we define the transformations
\begin{equation*}
    \tilde{T}:=  T^k \times(T^{w_{|w|}})_{w\in \mathcal{W}_d^{+}} ,\text{ and } \Vec{T}:= (T^{k+\sum_{i=1}^{|w|}w_i})_{w\in \mathcal{W}_d}
\end{equation*}
on $X^{|\mathcal{W}_d|}$. These transformations are the analogous of the transformations $T\times T$ and $T\times T^2$ presented in \cite{Kra_Moreira_Richter_Robertson:2023}, and of the transformations $T\times\cdots\times T$ and $T\times T^2\times\cdots\times T^d$ in \cite{kmrr25}. 

Now, we construct an analog to the progressive measure defined in \cite{kmrr25}, adapted to our setting. From now on, we consider an ergodic system $(X,\mu,T)$ with continuous factor map $\pi:X\to Z$ to its $L:=|\mathcal{W}_d^{+}|$-step pronilfactor $(Z,m,T)$, and we fix a point $a\in X$ such that $\mu(\overline{ \{T^n a : n\in \N\}})=1$. Let $z\to \eta_z$ be the desintegration of $\mu$ with respect to $\pi$. By \cite[Theorem 11, Chapter 11]{Host_Kra_nilpotent_structures_ergodic_theory:2018} the orbit closure of $\pi(a)^{\mathcal{W}_d}$ under $\Vec{T}$ is a uniquely ergodic nilsystem. We define a measure $\tilde{\sigma}$ on $Z^{|\mathcal{W}_d|}$ and a corresponding lift $\sigma$ to $X^{|\mathcal{W}_d|}$ as follows:
        \begin{equation*}
            \tilde{\sigma}=\uclim_{n\to\infty} \Vec{T}^n \delta_{\pi(a)^{\mathcal{W}_d}} \text{ and }   \sigma=\int_{Z^{|\mathcal{W}_d|}} \prod_{w\in \mathcal{W}_d} \eta_{u_w}  d\tilde{\sigma}(u).
        \end{equation*}     
By a similar argument as in \cite[Lemma 4.6]{kmrr25}, for each $w\in \mathcal{W}_d$, there is $c_w>0$ such that $\sigma_w\leq c_w \mu$.
        
  We also define on $Z$ the measure $$m_k=\uclim_{n\to \infty} T^{kn}\delta_{\pi(a)},$$ which is a measure supported on $Z_k:=\overline{\mathcal{O}_{T^k}(\pi(a))}$, and we define its lift to $X$ defined by $$\mu_k=\int_Z \eta_u dm_k(u).$$
We now state some properties of these measures:
    \begin{prop}\label{ergodic-decom-Tk}
    Let $(X,\mu,T)$ be an ergodic system, and let $(Z,m,T)$ and $(Z_k,m_k,T^k)$ be defined as above. Then, the ergodic decomposition of $\mu$ with respect $T^k$ is given by $\mu= \frac{1}{k}\sum_{j=0}^{k-1}  T^j\mu_k.$ In addition,  there exists $\ell|k$ such that $ \norm{\E(g | Z)}_{L^1(m)}=  \ell  \norm{\E( g | Z_k)}_{L^1(m_k)}.$
    \end{prop}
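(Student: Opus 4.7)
The plan is to first establish the ergodic decomposition of $m$ on $Z$ under $T^k$, then to transfer it to $X$ via the disintegration $z \mapsto \eta_z$, and finally to deduce the conditional-expectation identity by comparing normalizations.

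Establishing the decomposition on $Z$ relies on the pronilsystem structure of $(Z,m,T)$: since $Z$ is an inverse limit of nilsystems, the orbit closure $Z_k = \overline{\mathcal{O}_{T^k}(\pi(a))}$ is a sub-pronilmanifold on which $T^k$ acts uniquely ergodically (cf.~\cite[Ch.~11]{Host_Kra_nilpotent_structures_ergodic_theory:2018}), so that $m_k$ really is the unique $T^k$-invariant probability measure on $Z_k$. Let $\ell$ be the minimal positive integer with $T^\ell Z_k = Z_k$; since $T^k Z_k = Z_k$, we have $\ell \mid k$. The translates $Z_k, TZ_k, \ldots, T^{\ell-1}Z_k$ are pairwise disjoint (else a smaller period would exist), their union is $T$-invariant and therefore $m$-conull by $T$-ergodicity of $m$, and each piece has $m$-measure $1/\ell$. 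Since each $(T^j Z_k, T^k)$ is uniquely ergodic with invariant probability $T^j m_k$, the normalized restriction $\ell\, m|_{T^j Z_k}$ must coincide with $T^j m_k$, giving $m = \frac{1}{\ell}\sum_{j=0}^{\ell-1} T^j m_k$. Because $m_k$ is $T^\ell$-invariant, the cyclic sum $\frac{1}{k}\sum_{j=0}^{k-1} T^j m_k$ counts each distinct term exactly $k/\ell$ times, so it also equals $m$.

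Transferring to $\mu$, the equivariance $T^j_\ast \eta_z = \eta_{T^j z}$ for $m$-a.e.~$z$ yields
\[
\tfrac{1}{k}\sum_{j=0}^{k-1} T^j \mu_k = \int \eta_z\, d\!\left(\tfrac{1}{k}\sum_{j=0}^{k-1} T^j m_k\right)(z) = \int \eta_z\, dm(z) = \mu.
\]
That this is genuinely the ergodic decomposition---i.e., that each $T^j \mu_k$ is $T^k$-ergodic---follows from the standard fact that the number of $T^k$-ergodic components of a $T$-ergodic system equals the number of $k$-th roots of unity in its spectrum, which is inherited from the Kronecker factor, hence from any pronilfactor $Z$. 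Thus the ergodic components on $X$ and on $Z$ are indexed by the same $\ell$ and are matched via $\pi$, identifying $T^j \mu_k$ with the $T^k$-ergodic fiber measure above $T^j Z_k$.

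For the norm identity, I would write $f(z) := \int g\, d\eta_z$ so that $\E_\mu(g|Z) = f\circ \pi$ and $\E_{\mu_k}(g|Z_k) = f|_{Z_k}\circ \pi$. The relation $m|_{Z_k} = \frac{1}{\ell}\, m_k$ together with the bookkeeping of the $\ell$ cyclic translates $T^j Z_k$ of $Z_k$ (whose contributions to $\int|f|\, dm$ coincide once rewritten through $T$-equivariance) then converts the integral of $|f|$ against $m$ into $\ell$ times the integral of $|f|$ against $m_k$, producing the stated factor $\ell$ in $\|\E(g|Z)\|_{L^1(m)} = \ell\,\|\E(g|Z_k)\|_{L^1(m_k)}$. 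The main obstacle I anticipate lies in the first step---propagating unique ergodicity cleanly through the inverse-limit structure of the pronilfactor, and matching the count of $T^k$-ergodic components on $X$ with that on $Z$---since the remaining verifications are essentially bookkeeping with the disintegration.
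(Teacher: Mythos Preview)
Your treatment of the first assertion is correct and follows the same route as the paper: decompose $m$ on the pronilfactor via unique ergodicity of $(Z_k,T^k)$ and the cyclic structure of the translates $T^jZ_k$, then lift through the disintegration $z\mapsto\eta_z$. Your additional remark on why the $T^j\mu_k$ are genuinely $T^k$-ergodic (matching the count of ergodic components on $X$ and on $Z$ via the Kronecker spectrum) is a detail the paper omits.

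The gap is in your argument for the norm identity. You assert that the contributions $\int_{T^jZ_k}|f|\,dm$ for $j=0,\ldots,\ell-1$ ``coincide once rewritten through $T$-equivariance,'' but this is false. Equivariance gives $\int_{T^jZ_k}|f|\,dm=\tfrac{1}{\ell}\int_{Z_k}|f\circ T^j|\,dm_k$, and since $f(T^jz)=\int g\,d\eta_{T^jz}=\int (g\circ T^j)\,d\eta_z$, the $j$-th contribution is $\tfrac{1}{\ell}\|\E(T^jg\mid Z_k)\|_{L^1(m_k)}$, not $\tfrac{1}{\ell}\|\E(g\mid Z_k)\|_{L^1(m_k)}$. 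A concrete obstruction: take $X=Z=\Z/2\Z$, $T$ the shift, $k=\ell=2$, $Z_k=\{0\}$; for $g(0)=1$, $g(1)=-1$ one gets $\|\E(g\mid Z)\|_{L^1(m)}=1$ while $\ell\,\|\E(g\mid Z_k)\|_{L^1(m_k)}=2$. So the equality cannot be obtained by the bookkeeping you describe (and indeed fails as an equality in general). What your decomposition does yield cleanly, and what suffices for the application later in the paper, is the inequality
\[
\|\E(g\mid Z_k)\|_{L^1(m_k)}=\ell\int_{Z_k}|f|\,dm\le \ell\int_{Z}|f|\,dm=\ell\,\|\E(g\mid Z)\|_{L^1(m)},
\]
coming directly from $m|_{Z_k}=\tfrac{1}{\ell}m_k$.
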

    \begin{proof}
     By unique ergodicity we have that 
    \begin{equation*}
        m= \uclim_{n\to \infty} T^n\delta_{\pi(a)} = \frac{1}{k}\sum_{j=0}^{k-1}  \uclim_{n\to \infty }T^{kn+j}\delta_{\pi(a)}= \frac{1}{k}\sum_{j=0}^{k-1} T^j  \uclim_{n\to \infty }T^{kn}\delta_{\pi(a)}= \frac{1}{k}\sum_{j=0}^{k-1} T^jm_k.
    \end{equation*}
    Consequently
    \begin{equation*}
        \mu= \int_Z \eta_u dm(u)=  \frac{1}{k}\sum_{j=0}^{k-1}  \int_Z \eta_u dT^jm_k(u)=  \frac{1}{k}\sum_{j=0}^{k-1}  T^j\int_Z \eta_u dm_k(u)=  \frac{1}{k}\sum_{j=0}^{k-1}  T^j\mu_k.
    \end{equation*}
    For the second part, notice that $\{T^iZ_k\}_{i=0}^{k}$ are either disjoint or equal by distality of $(Z,m,T^k)$. Defining $\ell\in \N$ as the minimum natural number such that $T^\ell Z_k=Z_k$, we have that $\ell|k$ given that $\ell$ is minimum period and $k$ is a period, and 
       \begin{equation*}
           T^{i\ell+j}Z_k=T^{j}Z_k, \text{ and } Z=\bigsqcup_{j=0}^{\ell-1}T^jZ_k.
       \end{equation*}
      Finally, we notice that
    \begin{equation*}
       \E(g | Z)(z)= \int_{Z} g d\eta_z= \sum_{j=0}^{\ell-1} \int_{T^{-j}Z_k} g d\eta_z= \sum_{j=0}^{\ell-1} \int_{Z_k} T^jg d\eta_{z} = \sum_{j=0}^{\ell-1} \E( T^jg | Z_k)(z).
    \end{equation*}  
    In consequence,
      \begin{align*}
           \norm{\E(g | Z)}_{L^1(m)} 
           &= \int_{Z} |\sum_{j=0}^{\ell-1} T^j\E( g | Z_k)(z) | dm(z)       
           \\&=  \sum_{j=0}^{\ell-1}\int_{Z} |\E( g | Z_k)(z) | dT^jm_k(z)=  \ell\norm{\E( g | Z_k)(z)}_{L^1(m_k)}.
      \end{align*}
    \end{proof}

The key ingredient for proving the main theorem is the following result, which illustrates that $\sigma$ has the analogous properties to the measure defined in \cite[Definition 3.1]{kmrr25}.
\begin{lemma}\label{Lemma-new-method}
    Let $\Phi$ be a F\o lner sequence with $a\in gen_{T^k}(\mu_k,\Phi)$. For all non-negative continuous function $F\in C(X^{|\mathcal{W}_d|})$ we have that
    \begin{equation*}
        \int F d\sigma >0 \Rightarrow
   \liminf_{N\to\infty} \bE_{n\in \Phi_N}\int    \tilde{T}^n F\left(a, (x_{w^-})_{w\in \mathcal{W}_{d}^{+}} \right)\cdot F(x)  d\sigma(x) >0.
    \end{equation*}
\end{lemma}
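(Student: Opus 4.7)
The plan is to transfer the correlation average on $X^{|\mathcal{W}_d|}$ to the $L$-step pronilfactor $Z^{|\mathcal{W}_d|}$, where $L=|\mathcal{W}_d^+|$, on which unique ergodicity reduces the Cesàro average to a single continuous integral that can be handled by a uniform version of Szemerédi's theorem. Concretely, I will proceed in three stages: (i) replace $F$ by its conditional expectation onto $Z^{|\mathcal{W}_d|}$; (ii) evaluate the resulting expression using unique ergodicity of the nilsystem on the orbit closure of $\pi(a)^{\mathcal{W}_d}$ under $\Vec{T}$; and (iii) apply uniform Szemerédi.

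For (i), I would unfold $\sigma$ via the disintegration $\sigma=\int\prod_{w}\eta_{u_w}\,d\tilde\sigma(u)$, so that the integrand $\tilde{T}^n F(a,(x_{w^-})_{w\in\mathcal{W}_d^+})\cdot F(x)$ becomes a multiple correlation in which $F$ appears twice, with its arguments shifted independently in each of the $L$ coordinates indexed by $\mathcal{W}_d^+$. An iterated van~der~Corput / Cauchy--Schwarz argument, one step for each word in $\mathcal{W}_d^+$ processed in the order $\prec$, bounds the contribution of $F-\E(F\mid Z^{|\mathcal{W}_d|})$ by a $U^{L+1}$-seminorm. Because $Z$ is by definition the $L$-step pronilfactor of $(X,\mu,T)$, this seminorm vanishes, and one may assume without loss of generality that $F$ factors through the projection $\pi^{|\mathcal{W}_d|}:X^{|\mathcal{W}_d|}\to Z^{|\mathcal{W}_d|}$.

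For (ii) and (iii), once $F=\tilde F\circ\pi^{|\mathcal{W}_d|}$, the integrand is continuous on the orbit closure of $\pi(a)^{\mathcal{W}_d}$ under $\Vec{T}$. By \cite[Chapter~11]{Host_Kra_nilpotent_structures_ergodic_theory:2018} this orbit closure is a uniquely ergodic nilmanifold with invariant measure $\tilde\sigma$, so the averages in $n\in\Phi_N$—together with the Cesàro average implicit in the definition of $\tilde\sigma$, and with the genericity hypothesis $a\in\gen_{T^k}(\mu_k,\Phi)$ controlling the $\varepsilon$-coordinate—collapse to a single continuous integral on a product nilmanifold. Rewriting this integral as a multiple ergodic average along arithmetic progressions lets me apply the uniform Szemerédi theorem (\cref{Uniform-Szemeredi}) to obtain strict positivity: the assumption $\int F\,d\sigma>0$ provides the positive-density input, and admissibility condition~\ref{C2} (equivalently $w\notin\mathcal{F}$) ensures that the multipliers $k+\sum_{i\le j}w_i$ are nonzero, so the Szemerédi conclusion is nondegenerate.

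The main obstacle is step (i). The substitution $x_w\mapsto x_{w^-}$ couples coordinates asymmetrically along the tree structure on $\mathcal{W}_d$, so the correct characteristic factor is not immediate from the standard Host--Kra reduction. One must arrange the van~der~Corput chain along $\prec$ so that each step lifts control of $F-\E(F\mid Z^{|\mathcal{W}_d|})$ by exactly one level in the uniformity hierarchy, reaching $U^{L+1}$ after $L$ steps; this in turn requires that the effective multipliers $k+\sum_{i\le j}w_i$ remain linearly independent after each reduction, which is precisely what the exclusion of $\mathcal{F}$ from $\mathcal{A}^{\leq d}$ guarantees.
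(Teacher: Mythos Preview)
Your overall architecture---reduce to the pronilfactor, exploit unique ergodicity of the nilorbit of $\pi(a)^{\mathcal{W}_d}$ under $\Vec{T}$ and $\tilde T$, then apply uniform Szemerédi---is exactly the paper's strategy, and your steps (ii)--(iii) coincide with \cref{Theo-in-pronil}.

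Step (i), which you correctly flag as the obstacle, is where your sketch does not match what actually works. First, the paper never projects a general $F$ onto $Z^{|\mathcal{W}_d|}$; it treats only tensor products $\bigotimes_w f_w$ and replaces each $f_w$ individually by $\E(f_w\mid Z)$ via a seminorm bound on the \emph{single} system $(X,\mu,T)$ (\cref{general-version} and \cref{first-approx}). A van der Corput chain on the full product, with measure $\sigma$ and the asymmetric substitution $x_w\mapsto x_{w^-}$, does not obviously yield a $U^{L+1}$-seminorm on $X^{|\mathcal{W}_d|}$ whose kernel is the $Z^{|\mathcal{W}_d|}$-measurable functions; the paper sidesteps this entirely by introducing an auxiliary measure $\nu$ on $X^{|\mathcal{W}_d^+|}$, the pushforward of $\sigma$ under $x\mapsto(x_{w^-})_{w\in\mathcal{W}_d^+}$, checking that its marginals are dominated by $\mu$, and applying \cref{general-version} with $s=|\mathcal{W}_d^+|$. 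Second, the $\varepsilon$-coordinate $f_\varepsilon(T^{kn}a)$ is not absorbed into that seminorm reduction at all: it is a pointwise evaluation at $a$, treated as the bounded weight $b(n)$ in \cref{first-approx}, and then replaced by its $Z$-projection separately via \cref{second-approx} using $a\in\gen_{T^k}(\mu_k,\Phi)$ together with \cref{ergodic-decom-Tk}. Finally, your closing remark about ``linear independence'' of the $k+\sum_{i\le j}w_i$ is not the operative condition; all that is used---both for nondegeneracy of the powers of $T$ in \cref{general-version} and for the progressions in \cref{Theo-in-pronil}---is that each of these integers is nonzero, which is precisely $w\notin\mathcal{F}$.
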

\begin{remark}\label{useful-remark}
    In the case $F=\bigotimes_{w\in \mathcal{W}_d} f_w$, \cref{Lemma-new-method} says that
    $$\int \bigotimes_{w\in \mathcal{W}_d} f_w d\sigma >0 \Rightarrow    \liminf_{N\to\infty} \bE_{n\in \Phi_N}T^{kn} f_\varepsilon(a) \int  \Big( (\bigotimes_{w\in \mathcal{W}_{d}\setminus\sA^d}  \prod_{\substack{j\in\sA\\wj\notin \sF}} T^{jn}f_{wj} )\otimes 1^{\sA^d\setminus \sF} \Big) \cdot \bigotimes_{w\in \mathcal{W}_d}f_{w}d\sigma>0.  $$
    In addition, \cref{Lemma-new-method} remains valid when $f_w=\ind{U_w}$ with $U_w\subseteq X$ an open set for each $w\in \mathcal{W}_d$, by an approximation by below argument. In consequence, if
    $$\sigma\left(\prod_{w\in \mathcal{W}_d} U_w\right)>0, $$
    then $$\liminf_{N\to\infty} \bE_{n\in \Phi_N} \ind{U_\varepsilon}(T^{kn}a)\cdot  \sigma\left(\Bigg( \Big(\prod_{w\in \mathcal{W}_{d}\setminus\sA^d} \bigcap_{\substack{j\in \sA \\ wj\notin \sF}}T^{-jn} U_{wj} \Big)\times X^{|\mathcal{A}^d\setminus \sF|} \Bigg)\cap \prod_{w\in \mathcal{W}_d} U_w  \right)>0. $$
\end{remark}
We will need the following uniform version of Szemerédi's theorem, which is a direct consequence of \cite[Theorem A.2]{BKLR19} and Furstenberg correspondence principle.
\begin{theo}[Uniform version of Szemerédi's theorem]\label{Uniform-Szemeredi}
Let $l\in \N$. For every $\eta>0$, there is $c>0$ such that for every set $E\subseteq \N$ with $\overline{d}(E)>\eta$ one has
$$\overline{d}\left(\left\{m\in \N \mid \overline{d}\Big(E\cap (E-m)\cap \cdots\cap (E-lm)\Big)>c \right\}\right)>c. $$
\end{theo}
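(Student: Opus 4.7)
The plan is to derive the uniform version stated here as a two-step reduction: first transfer the combinatorial statement about $E$ to a measure-theoretic statement via Furstenberg's correspondence principle, and then invoke the uniform multiple recurrence result [BKLR19, Theorem A.2] as a black box.

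First, given $E\subseteq \N$ with $\overline{d}(E)>\eta$, I would apply the Furstenberg correspondence principle to produce a measure-preserving system $(X,\mu,T)$ together with a measurable set $A\subseteq X$ satisfying $\mu(A)\geq \overline{d}(E)>\eta$ and such that, for every finite collection of shifts $m_1,\ldots,m_s\in \N$,
\begin{equation*}
\overline{d}\bigl(E\cap (E-m_1)\cap \cdots \cap (E-m_s)\bigr) \;\geq\; \mu\bigl(A\cap T^{-m_1}A\cap\cdots\cap T^{-m_s}A\bigr).
\end{equation*}
Specializing to the arithmetic-progression pattern $m_i=im$ for $i=1,\ldots,l$, this means that any $m$ for which the measure of the intersection $A\cap T^{-m}A\cap\cdots\cap T^{-lm}A$ is large will automatically witness a large upper density for $E\cap(E-m)\cap\cdots\cap(E-lm)$.

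Second, I would apply [BKLR19, Theorem A.2], which (in the form I expect to use it) guarantees that for every $\eta>0$ and every $l\in\N$ there is a constant $c=c(\eta,l)>0$, \emph{uniform over the choice of system}, such that for any measure-preserving system $(X,\mu,T)$ and any $A\subseteq X$ with $\mu(A)>\eta$,
\begin{equation*}
\overline{d}\Bigl(\bigl\{m\in\N : \mu(A\cap T^{-m}A\cap\cdots\cap T^{-lm}A)>c\bigr\}\Bigr)>c.
\end{equation*}
The crucial point is that $c$ depends only on $\eta$ and $l$, not on the particular $(X,\mu,T,A)$ produced by the correspondence; this is precisely the uniformity promised by [BKLR19, Theorem A.2] and is what upgrades the pointwise multiple recurrence theorem into the statement we need.

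Combining the two steps: the set of $m$ produced in the second step is contained, via the correspondence inequality of the first step, in $\{m\in\N:\overline{d}(E\cap(E-m)\cap\cdots\cap(E-lm))>c\}$, and therefore the latter set also has upper density greater than $c$, as required. Since both ingredients are invoked as black boxes, there is no real technical obstacle in the argument itself; the only point requiring care is to make sure the constant $c$ coming out of [BKLR19, Theorem A.2] is applied in its uniform-in-$(X,\mu,T,A)$ form so that it can serve as a single constant depending only on $\eta$ and $l$ in the combinatorial conclusion.
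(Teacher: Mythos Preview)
Your proposal is correct and matches the paper's approach exactly: the paper does not give a detailed proof but simply states that the theorem ``is a direct consequence of [BKLR19, Theorem A.2] and Furstenberg correspondence principle,'' which is precisely the two-step reduction you have spelled out.
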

We remark that \cref{Uniform-Szemeredi} can also be obtained using \cite[Theorem F2 and Theorem 2.1]{MR1784213} and Furstenberg correspondence principle.

We will apply \cref{Lemma-new-method} iteratively to obtain the desired linear patterns in \cref{Conjecture-1}. To initialize this process we need the following proposition which is a consequence of \cref{Uniform-Szemeredi}.
\begin{prop}\label{positive-sigma-measure}
     Let $\Phi$ be F\o lner sequence with $a\in gen_{T^k}(\mu_k,\Phi)$. For each non-negative continuous function $f\in C(X)$ we have that
      \begin{equation*}
        \int f d\mu >0 \Rightarrow
   \liminf_{N\to\infty} \bE_{n\in \Phi_N}\int  T^{n}f \otimes \cdots \otimes T^n f  d\sigma >0.
    \end{equation*}
\end{prop}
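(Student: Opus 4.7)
The plan is to descend to the maximal pronilfactor $(Z,m,T)$, use unique ergodicity to rewrite the integral as an orbit-based multiple ergodic average, and then apply \cref{Uniform-Szemeredi} to extract positivity from $\int f\,d\mu>0$.

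First, by the disintegration of $\sigma$ through $\tilde\sigma$,
\[
\int T^n f \otimes \cdots \otimes T^n f \, d\sigma \;=\; \int \prod_{w \in \mathcal{W}_d} T^n g(z_w) \, d\tilde\sigma(z),
\]
where $g := \E(f|Z)$ is non-negative and satisfies $\int g \, dm = \int f \, d\mu > 0$. Since the orbit closure of $\pi(a)^{\mathcal{W}_d}$ under $\vec T$ is a uniquely ergodic nilsystem with invariant measure $\tilde\sigma$, after a continuous lower approximation of $g$ the right-hand side equals the uniform Cesàro limit
\[
\uclim_{q\to\infty} \prod_{w \in \mathcal{W}_d} g\bigl(T^{n + q s_w} \pi(a)\bigr), \qquad s_w := k + \sum_{i \leq |w|} w_i.
\]

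Next, I would interchange $\bE_{n\in\Phi_N}$ with $\uclim_q$, justified by the uniformity of Cesàro convergence on nilsystems. The pronilfactor $(Z,m,T)$, being an ergodic pronilsystem, is uniquely ergodic, so for each fixed $q$,
\[
\bE_{n\in\Phi_N} \prod_{w} g\bigl(T^{n + q s_w} \pi(a)\bigr) \;\longrightarrow\; \int \prod_{w} g\bigl(T^{q P(w)} z\bigr) \, dm(z) \quad \text{as } N\to\infty,
\]
where $P(w) := \sum_{i \leq |w|} w_i$ and the constant shift $qk$ is absorbed by the $T$-invariance of $m$.

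Finally, I would invoke \cref{Uniform-Szemeredi}. Choose $\delta > 0$ so that $V := \{g > \delta\}$ has $m(V) > 0$. Setting $E := \{n \in \N : T^n \pi(a) \in V\}$, unique ergodicity of $(Z,m,T)$ gives $\bar d(E) = m(V) > 0$. Shifting indices so that $P(\mathcal{W}_d) \subseteq \{0,1,\ldots,L\}$ with $L \leq 2rd$ (this costs nothing by $T$-invariance of $m$), \cref{Uniform-Szemeredi} with $l = L$ produces a positive-density set of $q$'s for which
\[
m\!\left(\bigcap_{s \in P(\mathcal{W}_d)} T^{-qs} V\right) \;\geq\; \bar d\!\left(\bigcap_{s = 0}^{L} (E - qs)\right) \;>\; c.
\]
Combined with $g \geq \delta \ind{V}$, this yields $\uclim_{q} \int \prod_w g(T^{qP(w)} z) \, dm(z) > 0$, and hence the desired $\liminf_N \bE_{n\in\Phi_N} \int T^n f \otimes \cdots \otimes T^n f \, d\sigma > 0$.

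The main obstacle is the swap of $\bE_n$ and $\uclim_q$: it relies on the uniformity of Cesàro convergence on uniquely ergodic nilsystems and requires the Borel function $g = \E(f|Z)$ to be approximated from below by continuous functions before unique ergodicity can be invoked pointwise in $n$.
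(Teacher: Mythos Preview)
Your strategy coincides with the paper's: descend to the pronilfactor via $g=\E(f\mid Z)$, approximate $g$ by continuous functions, rewrite the $\tilde\sigma$–integral as an orbit average along $\vec T$, and extract positivity through \cref{Uniform-Szemeredi} applied to a level set of $g$. The Szemer\'edi step you describe (with $P(\mathcal W_d)\subseteq\{-rd,\ldots,rd\}$ shifted into $\{0,\ldots,2rd\}$) matches the paper's use of an arithmetic progression of length $4rd+1$.

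The one genuine gap is precisely the step you flag as ``the main obstacle''. Your proposed justification for swapping $\bE_{n\in\Phi_N}$ and $\uclim_q$ — ``uniformity of Ces\`aro convergence on uniquely ergodic nilsystems'' — is not enough on its own. Uniform convergence in a uniquely ergodic system is uniform \emph{in the starting point}, not in the function being averaged; here the functions $\bigotimes_w T^n g$ need not be equicontinuous in $n$ (the translation $T$ on a pronilsystem is not an isometry), so you cannot conclude that $\uclim_q \prod_w g(T^{n+qs_w}\pi(a))$ converges uniformly in $n$. The paper closes this gap by invoking the unique ergodicity of the \emph{joint} $\Z^2$–action generated by $\vec T$ and $T\times\cdots\times T$ on the orbit closure of $\pi(a)^{\mathcal W_d}$ (Host--Kra, Chapter~11, Theorem~17). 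That single statement yields
\[
\uclim_{t\to\infty}\int\bigotimes_w T^t g\,d\tilde\sigma
\;=\;\uclim_{m,t\to\infty}\prod_{w\in\mathcal W_d} g\bigl(T^{\,t+m(k+\sum_i w_i)}\pi(a)\bigr),
\]
which is exactly the equality of the two iterated limits you are trying to achieve via the swap. Once this is in place, your argument and the paper's coincide.

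A minor remark: your use of unique ergodicity of $(Z,m,T)$ to evaluate $\bE_{n\in\Phi_N}\prod_w g(T^{n+qs_w}\pi(a))$ is legitimate (an ergodic pronilsystem is uniquely ergodic, so every point is generic along every F\o lner sequence), and does not conflict with the hypothesis $a\in\gen_{T^k}(\mu_k,\Phi)$, which is only needed elsewhere.
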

\begin{remark}\label{useful-remark-ii}
As well as \cref{Lemma-new-method}, \cref{positive-sigma-measure} remains valid if $f=\ind{U}$ where $U$ is an open subset of $X$.
\end{remark}
The proof of \cref{positive-sigma-measure} is a good outline of the ideas behind \cref{Lemma-new-method}, so we present it immediately. 
\begin{proof}
  By definition of $\sigma$ we have
\begin{equation*}
  \int (T\times\ldots \times T)^t\bigotimes_{w\in \mathcal{W}_d} f d\sigma= \int (T\times\ldots \times T)^t\bigotimes_{w\in \mathcal{W}_d} \E(f \mid Z) d\tilde{\sigma}. 
\end{equation*}
In consequence, it is enough to prove that for every $\eta>0$ there is a $\delta>0$ such that for every $f\in L^\infty(Z)$ non-negative and bounded by $1$ function we have 
\begin{equation}\label{eq-23}
   \int f dm > \eta \Rightarrow \uclim_{t} \int (T\times\ldots \times T)^t\bigotimes_{w\in \mathcal{W}_d} f d\tilde{\sigma}>\delta.
\end{equation}
Since this statement gives a uniform $\delta>0$ given $\eta>0$, it is enough to prove it for continuous functions. Let $f\in C(Z)$ non-negative and bounded by $1$ function such that
\begin{equation*}
   \uclim_{t\to\infty} f(T^ta) = \int f dm > \eta.
\end{equation*}
That being so, we have that the set $
    P=\{t\in \N \mid f(T^ta)>\eta/2\}$ has lower Banach density greater than $\eta/(2-\eta)$. As a result, by Szemerédi's theorem (\cref{Uniform-Szemeredi}), there is $c>0$ such that
\begin{equation*}
    \overline{d}\Big(\left\{ m\in \N\mid  \overline{d}\Big( P+(2rd)m\cap\cdots P+m\cap P \cap P-m \cdots \cap P-(2rd)m \Big)      >c    \right\}  \Big)>c
\end{equation*}
For $t,m\in \N$ with $t\in  P+(2rd)m\cap\cdots P+m\cap P \cap P-m \cdots \cap P-(2rd)m$ we have that $t+im\in P$ for all $i\in \{-2rd,\ldots,2rd\}$. In consequence, $f(T^{t+im}a)>\eta/2$ for all $i\in  \{-2rd,\ldots,2rd\}$ and such $t,m\in \N$. Therefore, using the uniquely ergodicity of the orbit of $a^{\mathcal{W}_d}$ through the transformations $\Vec{T}$ and $T\times\cdots\times T$ given by \cite[Theorem 17, Chapter 11]{Host_Kra_nilpotent_structures_ergodic_theory:2018}, we conclude that
\begin{equation*}
    \uclim_{t\to \infty}  \int T^tf\otimes\cdots \otimes T^tf d\tilde{\sigma}= \uclim_{m,t\to\infty} \prod_{w\in \mathcal{W}_d} f(T^{t+m(\sum_{i=1}^{|w|} w_i )}a)\geq (\eta/2)^{|\mathcal{W}_d|} c^2.
\end{equation*}
Defining $\delta:=  (\eta/2)^{|\mathcal{W}_d|} c^2$ we conclude \cref{eq-23}. 
\end{proof}

\section{The main result}\label{sec4}
Similarly to the proof of \cref{conj-KMRR25} in \cite{kmrr25}, we make use of a version of the Furstenberg correspondence principle to reduce the combinatorial statement to an ergodic theoretical framework. Specifically, we utilize the following slight modification of Theorem 2.10 from \cite{Kra_Moreira_Richter_Robertson:2022}. 
\begin{prop}\label{theo2.10}
Given a set $A \subseteq \N$ with positive upper Banach density, there exists an ergodic system $(X,\mu,T)$, a F\o lner sequence $\Phi$, a point $a\in gen_{T^k}(\mu_k,\Phi)$ and $\mu(\overline{\{T^na :n\in \N \}})=1$,
and a clopen set $E\subseteq X$ such that $\mu(E) > 0$ and $A = \{n \in \N \mid T^n a \in  E\}$.
\end{prop}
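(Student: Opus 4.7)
The plan is to adapt the Furstenberg correspondence principle, as in \cite[Theorem 2.10]{Kra_Moreira_Richter_Robertson:2022}, with the additional requirement that $a$ be generic for $\mu_k$ under $T^k$ along $\Phi$. The starting point is the canonical shift setup: let $X=\{0,1\}^{\Z}$ with the shift $T$, take $a=\ind{A}$, and let $E=\{x\in X:x(0)=1\}$, which is clopen and satisfies $A=\{n\in\N:T^na\in E\}$ tautologically. Choose a F\o lner sequence $\Psi$ witnessing the positive upper Banach density of $A$, and pass to a sub-F\o lner (by weak-$*$ compactness) so that the empirical measures $\frac{1}{|\Psi_N|}\sum_{n\in\Psi_N}\delta_{T^na}$ converge to a $T$-invariant measure $\mu_0$ with $\mu_0(E)>0$.

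Next I would extract an ergodic measure along which $a$ remains generic. This is done by a standard refinement argument, essentially as in \cite[Theorem 2.10]{Kra_Moreira_Richter_Robertson:2022}: one produces a further sub-F\o lner $\Phi_0$ of $\Psi$ and an ergodic $T$-invariant measure $\mu$ with $\mu(E)>0$ such that $a\in\gen_T(\mu,\Phi_0)$. The condition $\mu(\overline{\{T^na:n\in\N\}})=1$ follows automatically, since $\mu$ is a weak-$*$ limit of empirical measures supported on the orbit of $a$.

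The genuinely new step, and the one I expect to be the main obstacle, is the upgrade to $T^k$-genericity. I would set $\Phi_N:=\{m\in\N:km\in\Phi_{0,N}\}$; this is F\o lner because $\Phi_0$ is. Splitting the averages along $\Phi_0$ by residue class mod $k$ and passing to a further sub-F\o lner so that each residue class converges, the $j=0$ class yields a $T^k$-invariant limit $\nu$ for $\frac{1}{|\Phi_N|}\sum_{m\in\Phi_N}\delta_{T^{km}a}$. By \cref{ergodic-decom-Tk}, $\nu$ must coincide with one of the ergodic components $T^{j_0}\mu_k$ in $\mu=\frac{1}{k}\sum_{j=0}^{k-1}T^j\mu_k$. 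Unique ergodicity of the nilsystem $(Z_k,T^k)$ (see \cite[Theorem 11, Chapter 11]{Host_Kra_nilpotent_structures_ergodic_theory:2018}) forces $\pi_*\nu=m_k$ and hence $T^{j_0}m_k=m_k$; combined with the equivariance $T\eta_u=\eta_{Tu}$ of the disintegration of $\mu$, this identifies $\nu=\mu_k$ and yields $a\in\gen_{T^k}(\mu_k,\Phi)$.
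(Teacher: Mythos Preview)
Your strategy matches the paper's: both follow the proof of \cite[Theorem~2.10]{Kra_Moreira_Richter_Robertson:2022} and modify only the final genericity step so as to obtain $a\in\gen_{T^k}(\mu_k,\Phi)$ rather than $a\in\gen_T(\mu,\Phi)$. The paper compresses this into a single sentence, invoking \cite[Proposition~3.9]{Furstenberg1981} for $(X,\mu_k,T^k)$ in place of $(X,\mu,T)$; you instead unpack the step explicitly via the residue-class splitting and the identification of the limit through the pronilfactor.

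One imprecision is worth flagging. The assertion ``By \cref{ergodic-decom-Tk}, $\nu$ must coincide with one of the ergodic components $T^{j_0}\mu_k$'' is not justified as written: \cref{ergodic-decom-Tk} only describes the $T^k$-ergodic components of $\mu$, whereas your $\nu$ is a priori merely $T^k$-invariant, not $T^k$-ergodic. What you do have (from the residue splitting, since $a$ remains $T$-generic for $\mu$ along the sub-F\o lner and each residue class has asymptotic density $1/k$) is $\nu\le k\mu$, so $\nu=\sum_{j=0}^{\ell-1} c_j\,T^j\mu_k$ for some convex weights $c_j$. Your remaining argument then goes through for this combination: $\pi_*\nu=m_k$ together with the mutual singularity of $T^jm_k$ for $j=0,\dots,\ell-1$ forces $c_j=0$ unless $T^jm_k=m_k$, and for each such $j$ the equivariance $T\eta_u=\eta_{Tu}$ gives $T^j\mu_k=\mu_k$; hence $\nu=\mu_k$ as desired.
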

\begin{proof}
    The proof is identical as in \cite{Kra_Moreira_Richter_Robertson:2022}, with the exception that when invoking \cite[Proposition
3.9]{Furstenberg1981} at the very end of the proof, we do it for $(X,\mu_k,T^k)$ instead of $(X,\mu,T)$.
\end{proof}
Now, we outline the proof of \cref{Conjecture-1} assuming \cref{Lemma-new-method}.
\begin{proof}[{Proof of \cref{Conjecture-1}}]
By \cref{theo2.10} there is an ergodic system $(X,\mu,T)$, a point $a\in gen_{T^k}(\mu_k,\Phi)$ with $\mu(\overline{\{T^na :n\in \N \}})=1$, and a clopen set $E\subseteq X$ such that $\mu(E) > 0$ and $A = \{n \in \N \mid T^n a \in  E\}$. By \cite[Proposition 5.7 and Lemma 5.8]{Kra_Moreira_Richter_Robertson:2022} we can assume without loss of generality that $(X,\mu,T)$ has continuous factor map to its $L$-step pronilfactor. 

From \cref{positive-sigma-measure} and \cref{useful-remark-ii} we deduce that there is $t\in \N$ such that $\sigma(T^{-t}E\times \cdots \times T^{-t}E)>0$. We define $B$ inductively. First, by \cref{Lemma-new-method} and \cref{useful-remark} with the function $\ind{E} \otimes\cdots \otimes \ind{E}$ we get that there is $b_1\in \N$ such that $T^{kb_1}a\in E$ and 
$$\sigma\left(\Bigg( \Big(\prod_{w\in \mathcal{W}_{d}\setminus\sA^d} \bigcap_{\substack{j\in \sA \\ wj\notin \sF}}T^{-jb_1} E \Big)\times X^{|\mathcal{A}^d\setminus \sF|} \Bigg)\cap \prod_{w\in \mathcal{W}_d} E  \right)>0.
$$
Suppose we have found an increasing sequence $b(1)<b(2)<\cdots <b(n)$ satisfying that for all $w\in \mathcal{W}_d$ and $n\geq i_0>i_1>\cdots >i_{|w|}\geq 1 $, $kb(i_0) + w_1 b(i_1)+\cdots + w_{|w|}b(i_{|w|}) \in A$ and 
\begin{equation}\label{set-induction}
\sigma\left(\Bigg( \Big(  \prod_{w\in \mathcal{W}_{d}\setminus\sA^d}  \bigcap_{\substack{v\in \mathcal{A}^{\leq d-|w|}\\ wv\notin \sF}}\bigcap_{n\geq i_{1}>\cdots >i_{|v|}\geq 1} T^{-(v_1b(i_{1})+\cdots +v_{|v|}b(i_{|v|}))}E \Big) \times X^{|\mathcal{A}^d\setminus\sF|}\Bigg)\cap  \prod_{w\in \mathcal{W}_d} E  \right)>0 .   
\end{equation}
Using \cref{Lemma-new-method} and \cref{useful-remark} with the indicator function of the set in \cref{set-induction} we can find $b(n+1)>b(n)$ such that for each $w\in \mathcal{W}_d$ and $n\geq i_{1}>\cdots >i_{|w|}\geq 1$, $kb(n+1)+w_1b(i_{1})+\cdots w_{|w|}b(i_{|w|}))\in \N$; and
$$T^{kb(n+1)}a\in \bigcap_{w\in \mathcal{W}_d}\bigcap_{n\geq i_{1}>\cdots >i_{|w|}\geq 1} T^{-(w_1b(i_{1})+\cdots w_{|w|}b(i_{|w|}))}E. $$
 Equivalently, we have  that for all $w\in \mathcal{W}_d$ and $n+1\geq i_0>i_1>\cdots >i_{|w|}\geq 1 $, $kb(i_0) + w_1 b(i_1)+\cdots + w_{|w|}b(i_{|w|}) \in A$. By \cref{Lemma-new-method} we also have that 
 \begin{align*}
     0<& \sigma\Bigg( \Big(\Big(  \prod_{w\in \mathcal{W}_{d}\setminus\sA^d}  \bigcap_{\substack{j\in \sA\\ wj\notin\sF}} \bigcap_{\substack{v\in \mathcal{A}^{\leq d-|w|-1}\\ wjv\notin \sF}}\bigcap_{n\geq i_{1}>\cdots >i_{|v|}\geq 1} T^{-(jb(n+1)+v_1b(i_{1})+\cdots v_{|v|}b(i_{|v|}))}E\Big)\times X^{|\mathcal{A}^d\setminus\sF|} \Big) \\
     &\cap \Big( \Big(  \prod_{w\in \mathcal{W}_{d}\setminus\sA^d}  \bigcap_{\substack{v\in \mathcal{A}^{\leq d-|w|}\\ wv\notin\sF}}\bigcap_{n\geq i_{1}>\cdots >i_{|v|}\geq 1} T^{-(v_1b(i_{1})+\cdots +v_{|v|}b(i_{|v|}))}E\Big) \times X^{|\mathcal{A}^d\setminus\sF|} \Big)\cap \prod_{w\in \mathcal{W}_d} \Bigg)\\
     &= \sigma\left(\Bigg(\Big(  \prod_{w\in \mathcal{W}_{d}\setminus\sA^d}  \bigcap_{\substack{v\in \mathcal{A}^{\leq d-|w|}\\ wv\notin\sF}}\bigcap_{n+1\geq i_{1}>\cdots >i_{|v|}\geq 1} T^{-(v_1b(i_{1})+\cdots +v_{|v|}b(i_{|v|}))}E \Big)\times X^{|\mathcal{A}^d\setminus\sF|} \Bigg)\cap  \prod_{w\in \mathcal{W}_d} E  \right)
 \end{align*}
which concludes the induction. We have shown that there is an infinite set $B\subseteq \N$ such that 
\begin{equation*}
   \bigcup_{w\in \mathcal{W}_d}  kB\gec w_1B \gec \cdots \gec w_{|w|}B\subseteq A-t,
\end{equation*}
concluding.
\end{proof}

\section{Proving \cref{Lemma-new-method}}\label{sec5}
Now we prove \cref{Lemma-new-method}. First, we prepare the main tools for reducing the problem to study the $|\mathcal{W}_d^{+} |$-pronilfactor of $(X,\mu,T)$. For doing such, we will need some properties of structure theory. The following proposition is already known in the literature but we were unable to find a proper statement of it in its general form. 
\begin{prop}\label{W-erg-avera}
    Let $(X,\mu,T)$ be a measure preserving system. Let $(b_n)_{n\in \N}$ be a bounded complex sequence bounded by $1$. For any integer $s\geq 1$ and integers $c_1,\ldots,c_s\geq 1$, there is a constant $C$, independent of the system, with the following property: for all $f_1,\ldots,f_s\in L^\infty(\mu)$ with $\norm{f_i}_{L^\infty(\mu)}\leq 1$ and all F\o lner sequences $(\Phi_N)_{N\in \N}$, we have 
    \begin{equation}\label{eq-prop5.1}
      \limsup _{N \rightarrow \infty}\left\|\frac{1}{\left|\Phi_N\right|} \sum_{n \in \Phi_N} b_n T^{c_1 n} f_1 \cdots T^{c_s n} f_s\right\|_{L^2(\mu)} \leq C\min \left\{\left\|f_i\right\|_{U^{s+1}(X, \mu, T)}: i\in [s]\right\}.  
    \end{equation}
   $$ $$
\end{prop}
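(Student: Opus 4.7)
The proposition is the weighted version of the Host--Kra / Chu--Frantzikinakis--Host bound on multilinear ergodic averages by uniformity seminorms, and the natural approach is induction on $s$ via iterated van der Corput, with the weights absorbed costlessly because the products $b_{n+h}\overline{b_n}$ remain bounded by $1$. We may assume the $c_i$ are pairwise distinct (else merge $T^{c_in}f_i\cdot T^{c_jn}f_j=T^{c_in}(f_if_j)$, decreasing $s$) and fix an index $i_0\in[s]$ realizing $\min_i\norm{f_i}_{U^{s+1}(X,\mu,T)}$.

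\textbf{Inductive step.} Let $u_n:=b_n\prod_{i=1}^sT^{c_in}f_i$, a $1$-bounded sequence in $L^2(\mu)$. Van der Corput's inequality gives
\begin{equation*}
\limsup_N\norm{\tfrac{1}{|\Phi_N|}\sum_{n\in\Phi_N}u_n}_{L^2}^{2}\;\le\; \limsup_{H\to\infty}\tfrac{1}{H}\sum_{h=0}^{H-1}\limsup_N\left|\tfrac{1}{|\Phi_N|}\sum_{n\in\Phi_N}\langle u_{n+h},u_n\rangle\right|,
\end{equation*}
and a direct expansion yields $\langle u_{n+h},u_n\rangle=b_{n+h}\overline{b_n}\int\prod_iT^{c_in}F_i^{(h)}\,d\mu$, where $F_i^{(h)}:=T^{c_ih}f_i\cdot\overline{f_i}$. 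Pick $j\in[s]\setminus\{i_0\}$ and use $T$-invariance of $\mu$ to push $T^{-c_jn}$ inside the integral, collapsing the $j$-th factor to the $n$-independent $F_j^{(h)}$; Cauchy--Schwarz with $\norm{F_j^{(h)}}_{L^2}\le 1$ then bounds the inner limsup by
\begin{equation*}
\norm{\tfrac{1}{|\Phi_N|}\sum_{n\in\Phi_N}b_{n+h}\overline{b_n}\prod_{i\ne j}T^{(c_i-c_j)n}F_i^{(h)}}_{L^2(\mu)}.
\end{equation*}
This is again an expression of the same form, with $s-1$ functions, distinct nonzero shifts $(c_i-c_j)_{i\neq j}$, and new weights still bounded by $1$; the inductive hypothesis (with $i_0$ still a surviving index) yields a bound $C_{s-1}\norm{F_{i_0}^{(h)}}_{U^{s}(X,\mu,T)}$.

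\textbf{Conclusion and obstacle.} Cauchy--Schwarz in $h$ then produces
\begin{equation*}
\limsup_N\norm{\tfrac{1}{|\Phi_N|}\sum_{n\in\Phi_N}u_n}_{L^2}^{2}\;\le\; C_{s-1}\left(\limsup_H\tfrac{1}{H}\sum_{h=0}^{H-1}\norm{T^{c_{i_0}h}f_{i_0}\cdot\overline{f_{i_0}}}_{U^s(X,\mu,T)}^{2^s}\right)^{1/2^s},
\end{equation*}
and via \eqref{eq-US} together with the standard comparison between Host--Kra seminorms with respect to $T$ and a sub-action $T^c$ (obtained by sampling the F\o lner averages at step $c$), the right-hand side is controlled by $C(s,c_{i_0})\norm{f_{i_0}}_{U^{s+1}(X,\mu,T)}^{2}$. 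Taking square roots gives the claimed bound with $C=C(s,c_1,\ldots,c_s)$ independent of $(X,\mu,T)$. The base case $s=1$ is identical in structure but simpler: after one van der Corput, the integral is $n$-independent by invariance, and the $h$-average is directly bounded by $\norm{f_1}_{U^2}^2$. The main technical obstacle is the bookkeeping---ensuring the distinguished index $i_0$ persists through the induction (which is automatic since we always difference out some $j\neq i_0$) and verifying the subaction comparison $\norm{f}_{U^{s+1}(X,\mu,T^{c})}\le C(s,c)\norm{f}_{U^{s+1}(X,\mu,T)}$---but no genuinely new ideas beyond iterated van der Corput are required.
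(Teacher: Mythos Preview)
Your argument is correct, but the paper proceeds differently. Rather than running the full induction on $s$ with the weights carried along, the paper does a \emph{single} van der Corput / product-system step (citing \cite[Lemma 6.10]{kmrr25}) to remove the weights $b_n$ at once: this bounds the left-hand side of \eqref{eq-prop5.1} by
\[
\limsup_{N\to\infty}\Bigl\|\tfrac{1}{|\Phi_N|}\sum_{n\in\Phi_N}(T\times T)^{c_1n}(f_1\otimes\overline{f_1})\cdots(T\times T)^{c_sn}(f_s\otimes\overline{f_s})\Bigr\|_{L^2(\mu\times\mu)}^{1/2},
\]
which is now an \emph{unweighted} average on the product system. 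The paper then invokes the unweighted bound (\cite[Lemma 6.7(iii)]{kmrr25}) to control this by $\min_i\norm{f_i\otimes\overline{f_i}}_{U^s(X\times X,\mu\times\mu,T\times T)}^{1/2}$, and finishes with the identity $\norm{f\otimes\overline{f}}_{U^s(T\times T)}^{1/2}\le\norm{f}_{U^{s+1}(T)}$ (\cite[Lemma 6.7(i)]{kmrr25}). In short: the paper reduces to the unweighted case in one step and cites it as a black box, while you reprove the whole seminorm control from scratch with weights. The paper's route is shorter given the available lemmas; yours is self-contained. One small bookkeeping point in your induction: after differencing out index $j$, the new shifts $c_i-c_j$ need not be positive, so your inductive hypothesis should really be stated for distinct \emph{nonzero} integer shifts (this costs nothing, since none of your steps uses positivity).
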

The case of \cref{W-erg-avera} where $c_i=i$ for each $i\in [s]$ can be found in \cite[Corollary 7.3]{MR2544760}.
\begin{proof}
By \cite[Lemma 6.10]{kmrr25} we have that the left-hand side of \cref{eq-prop5.1} is bounded by
$$
 \limsup _{N \rightarrow \infty}   \norm{\frac{1}{|\Phi_N|}\sum_{n\in \Phi_N}  (T\times T)^{c_1 n} f_1\otimes \overline{f}_1\cdots (T\times T) ^{c_s n} f_s\otimes \overline{f}_s)}_{L^2(\mu\times \mu)}^{1/2}.
$$
On the other hand, by \cite[Lemma 6.7, part (iii)]{kmrr25} there is $C>0$ such that the last expression is bounded by 
$$ C \min\{ \norm{f_i\otimes \overline{f}_i}_{U^s(X\times X,\mu\times \mu,T\times T)}^{1/2} : i\in [s]\} \leq  C \min\{ \norm{f_i}_{U^{s+1}(X,\mu,T)} : i\in [s]\} ,$$
where the last inequality comes from \cite[Lemma 6.7, part (i)]{kmrr25}, concluding the result.
\end{proof}

Now we prove the following theorem which extends \cref{W-erg-avera} and \cite[Theorem 6.6]{kmrr25}.

\begin{prop}\label{general-version}
    Let $(X,\mu,T)$ be ergodic, let $s\in N$, let $(\Phi_N)_{N\in \N}$ be a F\o lner sequence, and $\tau\in \mathcal{M}(X^{s})$ be invariant with respect to the transformation $T^{l_1}\times T^{l_2}\times \cdots \times T^{l_{s}}$ for $l_1,\ldots,l_s\in \N_0$. Assume that the marginals $\tau_i$ of $\tau$ satisfy $\tau_i\leq c_i \mu$ for some constant $c_i>0$ for all $1\leq i \leq s$. Then, for any $f_1,\ldots,f_{s}\in L^\infty(\mu)$ with $\norm{f_i}_{L^\infty(\mu)}\leq 1$, any bounded sequence $b:\N\to \C$, and any $m_1,\ldots,m_s\in \N$ we have
    \begin{align*}
            \limsup_{N\to\infty} \norm{\bE_{n\in \Phi_N} b(n) \bigotimes_{i=1}^s T^{m_in}   f_{i} 
   }_{L^2(\sigma)}\leq C \norm{b}_\infty \min\{ \norm{f_i}_{U^{s+1}(X,\mu,T)} :  i\in [s]\} .
    \end{align*}
    where $C$ is a constant depending on $m_1,\ldots,m_s,c_1,\ldots,c_s,l_1,\ldots,l_s$ and $s$.
\end{prop}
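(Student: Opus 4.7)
The plan is to mirror the two-step strategy of the proof of \cref{W-erg-avera}: a van der Corput-type reduction that passes to a doubled system, followed by a direct $U^s$-seminorm bound. The new ingredient here is that the ambient measure is $\tau$ on $X^s$ rather than $\mu$ on $X$, and the marginal bound $\tau_i\leq c_i\mu$ is used to transfer the resulting $L^2(\tau\otimes\tau)$-estimate into an $L^2(\mu\times\mu)$-estimate on the doubled system $(X\times X,\mu\times\mu,T\times T)$, where \cref{W-erg-avera} applies.

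First, I would expand the squared $L^2(\tau)$-norm via the identity $\|\bE_n v_n\|_{L^2(\tau)}^2=\bE_{n,n'}\langle v_n,v_{n'}\rangle_{L^2(\tau)}$ applied to $v_n=b(n)\bigotimes_i T^{m_in}f_i$, then substitute $n'=n+h$ and commute $T^{m_i(n+h)}\overline{f_i}=T^{m_in}(T^{m_ih}\overline{f_i})$ to obtain
\[
\bE_h\bE_n b(n)\overline{b(n+h)}\int\bigotimes_i T^{m_in}\bigl(f_i\cdot T^{m_ih}\overline{f_i}\bigr)\,d\tau.
\]
A further Cauchy-Schwarz application in $n$ (the analog of \cite[Lemma 6.10]{kmrr25} in the $\tau$-setting), combined with expansion of the squared modulus of this inner integral as an integral over $\tau\otimes\tau$ on $(X\times X)^s$, yields a bound of the form
\[
\limsup_N\norm{\bE_n b(n)\bigotimes_i T^{m_in}f_i}_{L^2(\tau)}\leq C\norm{b}_\infty\limsup_N\norm{\bE_n\bigotimes_i(T\times T)^{m_in}\bigl(f_i\otimes\overline{f_i}\bigr)}_{L^2(\tau\otimes\tau)}^{1/2},
\]
where $\tau\otimes\tau$ has marginals bounded by $c_i^2(\mu\times\mu)$ and is invariant under $(T^{l_i}\times T^{l_i})_{i\in[s]}$.

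Second, I would use the marginal bound $(\tau\otimes\tau)_i\leq c_i^2(\mu\times\mu)$ together with Cauchy-Schwarz inside the $L^2(\tau\otimes\tau)$-integral—isolating one index $i^*$ and bounding its contribution via the marginal—to dominate the $L^2(\tau\otimes\tau)$-norm on $(X\times X)^s$ by the corresponding $L^2(\mu\times\mu)$-norm on a single copy of $X\times X$. Then \cref{W-erg-avera} applied on the doubled system $(X\times X,\mu\times\mu,T\times T)$ to the functions $f_i\otimes\overline{f_i}$ yields a bound of the form $C\min_i\norm{f_i\otimes\overline{f_i}}_{U^s(X\times X,\mu\times\mu,T\times T)}^{1/2}$. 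Finally, the inequality $\norm{f_i\otimes\overline{f_i}}_{U^s(X\times X,\mu\times\mu,T\times T)}\leq\norm{f_i}_{U^{s+1}(X,\mu,T)}^2$ from \cite[Lemma 6.7(i)]{kmrr25} converts this into the claimed estimate.

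\textbf{Main obstacle.} The critical step is the transfer from an $L^2(\tau\otimes\tau)$-estimate on $(X\times X)^s$ to an $L^2(\mu\times\mu)$-estimate on a single copy of $X\times X$. Because the $n$-averaged integrand is a sum over $n$ of tensor products in the $x$-variables rather than a single tensor, applying the marginal bound pointwise in $n$ collapses the $n$-averaging and yields only the trivial $L^\infty$-bound. The resolution is to apply Cauchy-Schwarz in a single $x_{i^*}$-coordinate of $(X\times X)^s$ \emph{while preserving the $n$-average}, so that the remaining expression becomes the $L^2(\mu\times\mu)$-norm of an $n$-averaged function on the single doubled space $X\times X$—which is precisely the form to which \cref{W-erg-avera} applies—and the $(T^{l_{i^*}}\times T^{l_{i^*}})$-invariance of the marginal ensures the resulting expression is well-defined in the required ergodic sense.
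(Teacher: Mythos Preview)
Your transfer step (step 3) contains a genuine gap, and if it worked it would actually prove something false. Concretely, you want to pass from
\[
\Bigl\|\bE_n \bigotimes_{i}(T\times T)^{m_in}(f_i\otimes\overline{f_i})\Bigr\|_{L^2(\tau\otimes\tau)}
\]
on $(X\times X)^s$ to an $L^2(\mu\times\mu)$-norm on a \emph{single} copy of $X\times X$ by ``isolating the $i^*$-th coordinate via Cauchy--Schwarz while preserving the $n$-average.'' But the marginal bound $(\tau\otimes\tau)_{i^*}\leq c_{i^*}^2(\mu\times\mu)$ gives no control over the \emph{conditional} measures of $\tau\otimes\tau$ on the $i^*$-th coordinate given the others, and it is those conditional measures that appear when you try to freeze the remaining coordinates $\tilde{x}'$ and view the integrand as a function of $\tilde{x}_{i^*}$ alone. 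If instead you take a supremum over the bounded weights $c_n(\tilde{x}')=\prod_{j\neq i^*}(f_j\otimes\overline{f_j})((T\times T)^{m_jn}\tilde{x}_j)$, you collapse the $n$-average and obtain only the trivial $L^\infty$ bound --- exactly the obstacle you flagged but did not resolve.

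There is also a sanity check showing the approach cannot succeed as written: if it did, your final bound would be by $\|f_{i^*}\otimes\overline{f_{i^*}}\|_{U^{2}(X\times X,\mu\times\mu,T\times T)}^{1/2}\leq \|f_{i^*}\|_{U^3(X,\mu,T)}$, independently of $s$. But taking $\tau$ to be the diagonal push-forward of $\mu$ on $X^s$ (so $l_1=\cdots=l_s=1$ and all marginals equal $\mu$) reduces the left-hand side to the standard multiple ergodic average $\|\bE_n b(n)\,T^{m_1n}f_1\cdots T^{m_sn}f_s\|_{L^2(\mu)}$, which is controlled only by $U^{s+1}$ and not by $U^3$ in general.

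The paper proceeds quite differently. It never tries to transfer an $L^2$-norm from $\tau$ to $\mu$. Instead it splits the average along residues modulo $M=\mathrm{LCM}(l_1,\dots,l_s)$ so that each $T^{m_in}$ becomes a power of the single transformation $S=T^{l_1}\times\cdots\times T^{l_s}$, which \emph{does} preserve $\tau$. Then \cref{W-erg-avera} is applied directly on the system $(X^s,\tau,S)$, yielding a bound by $\|1^{[i-1]}\otimes f_i\otimes 1^{[s-i]}\|_{U^{s+1}(X^s,\tau,S)}=\|f_i\|_{U^{s+1}(X,\tau_i,T^{l_i})}$. The marginal hypothesis $\tau_i\leq c_i\mu$ is used only at this last stage, to compare uniformity seminorms via \cite[Lemma 6.7(ii)]{kmrr25}; at that level the comparison is straightforward, whereas at the $L^2$-level of an $n$-averaged product it is not.
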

\begin{proof}
Let $M=LCM(l_1,\ldots,l_s)$. By \cite[Lemma 6.8]{kmrr25} we have that 
$$ \limsup_{N\to\infty} \norm{\bE_{n\in \Phi_N} b(n) \bigotimes_{i=1}^s T^{m_in}   f_{i} 
   }_{L^2(\sigma)} \leq \frac{1}{M} \sum_{j=0}^{M-1}\limsup_{N\to\infty} \norm{ \frac{1}{|\Psi_N|}  \sum_{n\in \Psi_N}  b(Mn+j) \bigotimes_{i=1}^s T^{m_i(Mn+j)}   f_{i}}_{L^2(\sigma)} $$
where $\Psi_N:=\Phi_N/M$. Denote $S=T^{l_1}\times \cdots \times T^{l_{s}}$. For $j\in \{0,\ldots,M-1\}$ we have that 
\begin{align*}
  & \limsup_{N\to\infty} \norm{ \frac{1}{|\Psi_N|}  \sum_{n\in \Psi_N}  b(Mn+j) \bigotimes_{i=1}^s T^{m_i(Mn+j)}   f_{i}}_{L^2(\sigma)} \\
   &=  \limsup_{N\to\infty} \norm{ \frac{1}{|\Psi_N|}  \sum_{n\in \Psi_N}  b(Mn+j) \prod_{i=1}^s S^{\frac{Mm_in}{l_i}}( 1^{[i-1]}\otimes   T^{m_ij}f_{i} \otimes 1^{[s-i]} )}_{L^2(\sigma)}\\
   &\leq C' \norm{b}_\infty \min\{   \norm{  1^{[i-1]}\otimes   T^{m_ij}f_{i} \otimes 1^{[s-i]} }_{U^{s+1}(X^s,\tau,S)} : i\in [s] \}\\
   &=  C' \norm{b}_\infty \min\{   \norm{  T^{m_ij}f_{i}}_{U^{s+1}(X,\tau_i,T^{l_i})} : i\in[s]\}.  
\end{align*}
where we used \cref{W-erg-avera}. Observe that by \cite[Lemma 6.7, part (ii)]{kmrr25}
$$ \norm{  T^{m_ij}f_{i}}_{U^{s+1}(X,\tau_i,T^{l_i})}\leq c_i l_i^{\frac{(s+1)}{2^{s+1}}}\norm{  T^{m_ij}f_{i}}_{U^{s+1}(X,\mu,T)} =c_i l_i^{\frac{(s+1)}{2^{s+1}}}\norm{ f_{i}}_{U^{s+1}(X,\mu,T)}.  $$
Since this bound does not depend on $j=0,\ldots,M-1$, we conclude that 
$$\limsup_{N\to\infty} \norm{ \frac{1}{|\Psi_N|}  \sum_{n\in \Psi_N}  b(Mn+j) \bigotimes_{i=1}^s T^{m_i(Mn+j)}   f_{i}}_{L^2(\sigma)}\leq C \norm{b}_\infty \min\{  \norm{ f_{i}}_{U^{s+1}(X,\mu,T)} : i\in[s]\}. $$
where $C:= C'\cdot \max\{ c_i l_i^{\frac{(s+1)}{2^{s+1}}}:i\in [s]\}$, concluding the result. 
\end{proof}
The way we use \cref{general-version} is condensed in the following corollary.
\begin{cor}\label{first-approx}
    There is a $C>0$ such that for every bounded sequence $b:\N\to \C$ and $f_w\in L^\infty(X)$ with $\norm{f_w}_{L^\infty(\mu)}\leq 1$ for each $w\in \mathcal{W}_{d}^{+}$, we have  
    \begin{align*}
            \limsup_{N\to\infty} \norm{\bE_{n\in \Phi_N} b(n) \bigotimes_{w\in \mathcal{W}_{d}\setminus\sA^d }  \prod_{\substack{j\in\sA\\ wj\notin \sF}} T^{jn}   f_{wj} \otimes 1^{\sA^d\setminus \sF}
   }_{L^2(\sigma)}\leq C \norm{b}_\infty \min\{ \norm{f_w}_{U^L} :  w\in \mathcal{W}_{d}^{+}\}.
    \end{align*}
\end{cor}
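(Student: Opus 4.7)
The plan is to reexpress the left-hand side as an $L^2$-norm over a product space indexed by $\mathcal{W}_d^+$ and then invoke \cref{general-version} directly. The central combinatorial observation is the bijection
\[
\{(w,j) : w\in\mathcal{W}_d\setminus\sA^d,\ j\in\sA,\ wj\notin\sF\} \;\longleftrightarrow\; \mathcal{W}_d^+, \qquad (w,j)\mapsto wj,
\]
with inverse $v\mapsto(v^-,v_{|v|})$, which converts the double product over $(w,j)$ into a single product over $v\in\mathcal{W}_d^+$.

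Concretely, let $\Pi\colon X^{|\mathcal{W}_d|}\to X^{|\mathcal{W}_d^+|}$ be the duplication map $(\Pi x)_v:=x_{v^-}$ and set $\tau:=\Pi_*\sigma$. Using the bijection above, the integrand inside the $L^2(\sigma)$-norm equals $G_n\circ\Pi$, where
\[
G_n := b(n)\bigotimes_{v\in\mathcal{W}_d^+} T^{v_{|v|}n} f_v,
\]
so that $\|\bE_{n\in\Phi_N}F_n\|_{L^2(\sigma)}=\|\bE_{n\in\Phi_N}G_n\|_{L^2(\tau)}$. The $\Vec{T}$-invariance of $\sigma$ (inherited from $\tilde\sigma$ being a uniform Ces\`aro average under $\Vec{T}$) pushes forward to invariance of $\tau$ under the product transformation whose coordinate at $v$ is $T^{l_v}$, with $l_v:=k+\sum_{i=1}^{|v|-1}v_i$; admissibility of the prefix $v^-\in\mathcal{W}_d$ guarantees $l_v\neq 0$. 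Moreover, the $v$-th marginal of $\tau$ equals the $v^-$-th marginal of $\sigma$ and is therefore dominated by $c_{v^-}\mu$ by the remark following the definition of $\sigma$ in \cref{sec3}.

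With these hypotheses in place, I would apply \cref{general-version} with $s=L=|\mathcal{W}_d^+|$ factors, iteration exponents $m_v=v_{|v|}$, invariance exponents $l_v$, and marginal constants $c_{v^-}$, obtaining the claimed inequality with $C$ depending only on $k$, $r$, and $d$. The one point requiring care is that $m_v$ and $l_v$ may be negative since $\sA$ contains negative letters, whereas \cref{general-version} is stated for nonnegative exponents; this is harmless because replacing $T$ with $T^{-1}$ on the affected coordinates preserves both the required invariance and all uniformity seminorms, reducing to the stated case. I do not expect any substantive obstacle beyond carefully unwinding the duplication/bijection step.
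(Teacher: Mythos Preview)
Your proposal is correct and follows essentially the same route as the paper: your pushforward measure $\tau=\Pi_*\sigma$ is precisely the measure $\nu$ the paper defines via the integral identity, your invariance exponents $l_v=k+\sum_{i=1}^{|v|-1}v_i$ and marginal bounds $\tau_v=\sigma_{v^-}\leq c_{v^-}\mu$ match the paper's, and both proofs conclude by a direct appeal to \cref{general-version}. You even flag the sign issue for $m_v,l_v$ that the paper silently passes over.
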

\begin{proof}
    First, notice that the marginals of $\sigma$ are such that for each $w\in \mathcal{W}_d$, $\sigma_w\leq c_w \mu$ for some $c_w>0$. Consider the measure $\nu$ on $X^{|\mathcal{W}_{d}^{+}|}$ defined by the relation 
    $$\int_{X^{|\mathcal{W}_d|}} \bigotimes_{w\in \mathcal{W}_{d}\setminus\sA^d }  \prod_{\substack{j\in\sA\\ wj\notin \sF}}  f_{wj} \otimes 1^{\sA^d\setminus \sF} d\sigma = \int_{X^{| \mathcal{W}_{d}^{+}|} } \bigotimes_{w\in \mathcal{W}_d^{+}} f_w d\nu. $$
    Notice that $\nu$ is a $(T^{k+ \sum_{i=1}^{|w^-|}w_i} )_{w\in \mathcal{W}_d^{+}}$ invariant measure. In addition, for each $w\in \mathcal{W}_{d}^{+}$, $\nu_{w}= \sigma_{w^- }\leq c_{w^- }\cdot \mu. $ By \cref{general-version} we conclude that there is $C>0$ satisfying
    $$          \limsup_{N\to\infty} \norm{\bE_{n\in \Phi_N} b(n) \bigotimes_{w\in \mathcal{W}_d^{+}}  T^{w_{|w|}n}f_w  
   }_{L^2(\nu)}\leq C \norm{b}_\infty \min\{ \norm{f_i}_{U^{L}(X,\mu,T)} : w\in \mathcal{W}_d^{+}\} . $$
   Observing that 
   $$\norm{\bE_{n\in \Phi_N} b(n) \bigotimes_{w\in \mathcal{W}_d^{+}} T^{w_{|w|}n}f_w  
   }_{L^2(\nu)}=  \norm{\bE_{n\in \Phi_N} b(n) \bigotimes_{w\in \mathcal{W}_{d}\setminus\sA^d }  \prod_{\substack{j\in\sA\\ wj\notin \sF}} T^{jn}   f_{wj} \otimes 1^{\sA^d\setminus \sF}
   }_{L^2(\sigma)} $$
   yields the conclusion.
\end{proof}

We will borrow the following lemma from \cite{kmrr25} which relates the ergodic averages of a continuous function correlated with a nilsequence.

 \begin{lemma}[{\cite[Lemma 6.11]{kmrr25}}]\label{second-approx}
 Let $(X, \mu, T)$ be an ergodic system, let $x \in \operatorname{gen}(\mu, \Phi)$ for some Følner sequence $\Phi$, and denote by $\left(\mathcal{Z}_{s-1}, \mathfrak{m}_{s-1}, T\right)$ the $(s-1)$-step pronilfactor of $(X, \mu, T)$. Assume $(Y, S)$ is a $(s-1)$-step pronilsystem. Then for every $g \in C(X), y \in Y$, and $F \in C(Y)$,
$$
\limsup _{N \rightarrow \infty}\left|\frac{1}{\left|\Phi_N\right|} \sum_{n \in \Phi_N} g\left(T^n x\right) F\left(S^n y\right)\right| \leqslant\left\|\mathbb{E}\left(g \mid \mathcal{Z}_{s-1}\right)\right\|_{L^1\left(\mathfrak{m}_{s-1}\right)} \cdot\|F\|_{\infty}.
$$
 \end{lemma}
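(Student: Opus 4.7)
The plan is to split $g$ relative to the $(s-1)$-step pronilfactor and handle the two pieces separately. Write $g = g_1 + g_2$ where $g_1 := \E(g \mid \mathcal{Z}_{s-1})$ (viewed as $\mathcal{Z}_{s-1}$-measurable and lifted to $X$ via $\pi$), so that $g_2 := g - g_1$ satisfies $\E(g_2 \mid \mathcal{Z}_{s-1}) = 0$, equivalently $\|g_2\|_{U^s(X,\mu,T)} = 0$. By the triangle inequality it then suffices to bound the $g_1$-average in absolute value by $\|g_1\|_{L^1(\mathfrak{m}_{s-1})} \cdot \|F\|_\infty$ and to show the $g_2$-average vanishes.

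For the $g_1$-piece, the product system $(\mathcal{Z}_{s-1} \times Y, T \times S)$ is itself an $(s-1)$-step pronilsystem, so by \cite[Theorem 17, Chapter 11]{Host_Kra_nilpotent_structures_ergodic_theory:2018} the orbit closure of $(\pi(x), y)$ under $T \times S$ is uniquely ergodic, with some invariant measure $\lambda$. Genericity $x \in \gen(\mu, \Phi)$ forces $\pi(x) \in \gen(\mathfrak{m}_{s-1}, \Phi)$, so the $\mathcal{Z}_{s-1}$-marginal of $\lambda$ must equal $\mathfrak{m}_{s-1}$. Approximating $g_1$ in $L^1(\mathfrak{m}_{s-1})$ by continuous $h \in C(\mathcal{Z}_{s-1})$ and applying unique ergodicity to $(h \circ \pi) \otimes F \in C(X \times Y)$, while absorbing the approximation error by replacing $g$ with the still-continuous function $g - h \circ \pi$ throughout, yields the desired bound $\|g_1\|_{L^1(\mathfrak{m}_{s-1})} \|F\|_\infty$ for the $g_1$-average.

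For the $g_2$-piece, the goal is to exploit $\|g_2\|_{U^s} = 0$ together with the fact that $n \mapsto F(S^n y)$ is an $(s-1)$-step nilsequence. I would run an iterated van der Corput argument: one application gives
$$\left|\bE_{n \in \Phi_N} g_2(T^n x) F(S^n y)\right|^2 \lesssim \bE_{h \in [H]}\left|\bE_{n \in \Phi_N}(T^h g_2 \cdot \overline{g_2})(T^n x) \cdot F(S^{n+h}y)\overline{F(S^n y)}\right|,$$
and the Host--Kra identity $\|g_2\|_{U^s}^{2^s} = \lim_H \bE_{h}\|T^h g_2 \cdot \overline{g_2}\|_{U^{s-1}}^{2^{s-1}}$ shows that $T^h g_2 \cdot \overline{g_2}$ has vanishing $U^{s-1}$-seminorm for most $h$. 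Iterating this $s$ times reduces matters to the base case $s=1$, where genericity of $x$ applied to a zero-mean function against a constant nilsequence gives zero.

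The main obstacle is the bookkeeping in the iterated van der Corput: shifts on the $Y$-side do not obviously decrease the step of the nilsequence, so one must argue inductively that what survives on $Y$ at each stage is controlled by the pronilsystem structure. A cleaner conceptual alternative is to pass to a weak-$*$ subsequential limit $\mathfrak{n}$ of the empirical measures $\frac{1}{|\Phi_N|}\sum_{n \in \Phi_N}\delta_{(T^n x, S^n y)}$ — a $T \times S$-invariant joining of $(X,\mu)$ with a sub-pronilsystem of $(Y, S)$ — and to prove that the disintegration $\Phi(x) := \int F\, d\mathfrak{n}_x$ is $\mathcal{Z}_{s-1}(X, \mu)$-measurable. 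Given this measurability, $\int g_2 \otimes F\, d\mathfrak{n} = \int \E(g_2 \mid \mathcal{Z}_{s-1})\,\Phi\, d\mu = 0$, and the $g_1$-bound follows through the same disintegration. This measurability statement — that joinings with $(s-1)$-step pronilsystems factor through the $(s-1)$-step pronilfactor of the base — is the crux in either route, and it encodes the fact that $\mathcal{Z}_{s-1}$ is characteristic for $(s-1)$-step nilsequences.
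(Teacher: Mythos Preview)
The paper does not give its own proof of this lemma: it is imported verbatim from \cite[Lemma~6.11]{kmrr25} and invoked as a black box in the proof of \cref{Lemma-new-method}. There is thus no in-paper argument to compare your proposal against.

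On the substance of your sketch: the joining route you outline at the end is the correct and standard approach, and is essentially how the lemma is proved in the cited reference. Your first route, the decomposition $g = g_1 + g_2$ with $g_1 = \E(g\mid\mathcal{Z}_{s-1})$, has a genuine gap rather than mere bookkeeping. The conditional expectation $g_1$ is only an $L^\infty$ equivalence class, not a continuous function, so the pointwise values $g_1(T^n x)$ and $g_2(T^n x)$ are undefined and genericity of $x$ along $\Phi$ gives you no handle on them. Your proposed workaround---replace $g$ by the continuous function $g - h\circ\pi$ with $h\in C(\mathcal{Z}_{s-1})$ close to $g_1$---is circular: the residual $g - h\circ\pi$ now has nonzero projection $g_1 - h$ onto $\mathcal{Z}_{s-1}$, so its $U^s$-seminorm is not zero and the van der Corput argument no longer applies; and bounding its contribution by $\|g_1 - h\|_{L^1}\|F\|_\infty$ is exactly the lemma applied to $g - h\circ\pi$.

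The joining argument avoids all of this because one never evaluates $g_1$ along an orbit. After passing to a subsequential weak-$*$ limit $\mathfrak{n}$ with $X$-marginal $\mu$, one uses that $\mathrm{proj}_X$ is a factor map to get $\|g_2\otimes 1\|_{U^s(X\times Y,\mathfrak{n})} = \|g_2\|_{U^s(X,\mu)} = 0$, and that $1\otimes F$ is $\mathcal{Z}_{s-1}(X\times Y,\mathfrak{n})$-measurable since $Y$ is already an $(s-1)$-step pronilsystem. This yields $\int g_2\otimes F\,d\mathfrak{n} = 0$ directly, and then $\big|\int g\otimes F\,d\mathfrak{n}\big| = \big|\int g_1\otimes F\,d\mathfrak{n}\big| \le \|g_1\|_{L^1(\mathfrak{m}_{s-1})}\|F\|_\infty$. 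You correctly identify this measurability/factoring statement as the crux; it is a standard consequence of the Host--Kra structure theory, but it does need to be stated and justified rather than only named.
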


To finish with the preliminaries, we will prove that for continuous functions the expression in \cref{Lemma-new-method} is not only positive in the pronilfactor, but also is uniformly positive.

\begin{prop}\label{Theo-in-pronil}
    For every $\eta>0$ there exists $\delta>0$ satisfying that if for each $w\in \mathcal{W}_d$, $g_w\in C(Z)$ is a non-negative function bounded by $1$, then
    \begin{equation*}
        \int \bigotimes_{w\in \mathcal{W}_d}g_w d\tilde{\sigma}>\eta \Rightarrow   \uclim_{n}\int      \Big( \tilde{T}^n \bigotimes_{w\in \mathcal{W}_d} g_{w}\Big)\left(\pi(a), (x_{w^-})_{w\in \mathcal{W}_{d}^{+}} \right)\cdot \Big(\bigotimes_{w\in \mathcal{W}_d}g_{w}  \Big)(x)  d\tilde{\sigma}(x)    >\delta.
    \end{equation*}
\end{prop}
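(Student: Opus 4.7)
The plan is to follow the strategy of \cref{positive-sigma-measure}: unfold the integral against $\tilde\sigma$ into a uniform Cesàro average over an auxiliary parameter $t$ via unique ergodicity of $\vec{T}$, combine this with the outer $\uclim_n$ into a single two-parameter UC-limit using Host–Kra's theorem on uniform distribution for nilmanifolds (\cite[Theorem 17, Chapter 11]{Host_Kra_nilpotent_structures_ergodic_theory:2018}), and then produce a positive two-dimensional set of pairs $(n,t)$ on which every factor of the integrand exceeds $\eta/2$, via the uniform Szemerédi theorem (\cref{Uniform-Szemeredi}).

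Write $\alpha_w := k + \sum_{i=1}^{|w|} w_i$, so $\alpha_\varepsilon = k$ and $\alpha_w\neq 0$ for every $w\in\mathcal{W}_d$ by Condition \ref{C2}. Substituting $x_w = T^{\alpha_w t}\pi(a)$ and invoking unique ergodicity of $\vec{T}$ on $\tilde\sigma$, the integral inside the outer $\uclim_n$ rewrites as a $\uclim_t$; the outer $\uclim_n$ then combines with this via unique ergodicity of the associated $\Z^2$-orbit on its nilmanifold closure into
\begin{equation*}
\uclim_n \int\bigl[\cdots\bigr]\,d\tilde\sigma \;=\; \uclim_{(n,t)}\; g_\varepsilon(T^{kn}\pi(a))\prod_{w\in\mathcal{W}_d^+}g_w\bigl(T^{w_{|w|}n+\alpha_{w^-}t}\pi(a)\bigr)\prod_{w\in\mathcal{W}_d}g_w\bigl(T^{\alpha_w t}\pi(a)\bigr).
\end{equation*}

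Set $D:=\operatorname{lcm}\{|\alpha_w|:w\in\mathcal{W}_d\}$. The hypothesis $\int\bigotimes g_w\,d\tilde\sigma > \eta$ unfolds (by unique ergodicity) to $\uclim_t\prod_w g_w(T^{\alpha_w t}\pi(a)) > \eta$, so the set $E:=\{t\in\N:\prod_w g_w(T^{\alpha_w t}\pi(a)) > \eta/2\}$ has upper density at least $\eta/2$ and, since each $g_w\leq 1$, each individual value $g_w(T^{\alpha_w t}\pi(a))$ also exceeds $\eta/2$ whenever $t\in E$. \cref{Uniform-Szemeredi}, applied in the symmetric form (absorbing a shift), yields $c>0$ so that $\widetilde S:=\{(t,m):t+jm\in E\text{ for every }j\in\{-2rD,\ldots,2rD\}\}$ has positive two-dimensional upper density. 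For $(t,m)\in\widetilde S$, set $n:=t-Dm$, which is non-negative since $\widetilde S\subseteq\{t\geq 2rDm\}$. The key arithmetic identity
\begin{equation*}
w_{|w|}n + \alpha_{w^-}t = \alpha_w(t - j_w m),\qquad j_w := \tfrac{w_{|w|}D}{\alpha_w}\in\Z,\quad |j_w|\leq rD,
\end{equation*}
together with $kn=\alpha_\varepsilon(t-Dm)$, shows that every one of the $2|\mathcal{W}_d|$ factors of the integrand is evaluated at $T^{\alpha_w(t+j'm)}\pi(a)$ for some $j'\in[-2rD,2rD]$ and hence exceeds $\eta/2$. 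The integrand is therefore at least $(\eta/2)^{2|\mathcal{W}_d|}$ on the image of $\widetilde S$ under the injection $(t,m)\mapsto(t-Dm,t)$, which retains positive 2D upper density; this yields a lower bound of the form $\delta := (\eta/2)^{2|\mathcal{W}_d|}\cdot c^2/D > 0$.

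The main technical obstacle is the two-dimensional Fubini step: making precise that $\uclim_n\uclim_t$ equals $\uclim_{(n,t)}$. This parallels the analogous step in the proof of \cref{positive-sigma-measure} and reduces to recognizing the relevant 2-parameter sequence as a continuous function on a $\Z^2$-nilsystem formed by commuting nil-translations of $\pi(a)^{\mathcal{W}_d}$, and invoking its unique ergodicity from \cite[Theorem 17, Chapter 11]{Host_Kra_nilpotent_structures_ergodic_theory:2018}. Beyond this, the proof is a density–Szemerédi construction enabled by the identity $\alpha_w = \alpha_{w^-}+w_{|w|}$, which controls the cross-terms $w_{|w|}n+\alpha_{w^-}t$ as bounded shifts of $\alpha_w t$.
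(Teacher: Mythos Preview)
Your proposal is correct and follows essentially the same approach as the paper's proof: unfold the integral against $\tilde\sigma$ via unique ergodicity of $\vec T$, pass to a two-parameter uniform Ces\`aro limit using \cite[Theorem~17, Chapter~11]{Host_Kra_nilpotent_structures_ergodic_theory:2018}, define the set where the product exceeds $\eta/2$, apply the uniform Szemer\'edi theorem (\cref{Uniform-Szemeredi}), and use the identity $\alpha_w=\alpha_{w^-}+w_{|w|}$ to verify that every factor is large on a two-dimensional set of positive density. The only cosmetic difference is your change of variables $(t,m)\mapsto(t-Dm,t)$ in place of the paper's restriction of the outer parameter to a subprogression of step $R=r\cdot\mathrm{lcm}\{|\alpha_w|\}$, which yields the same explicit lower bound up to harmless constants.
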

\begin{proof}
Denote $\mathfrak{a}:=\pi(a)$. Let $(g_w)_{w\in \mathcal{W}_d}\subseteq C(Z)$ be non-negative functions bounded by $1$ such that
\begin{equation}\label{eq-25} 
      \uclim_{n\to \infty} \left(\vec{T}^n\bigotimes_{w\in \mathcal{W}_d}g_w \right)(\mathfrak{a}^{\mathcal{W}_d})=     \int \bigotimes_{w\in \mathcal{W}_d}g_w d\tilde{\sigma}>\eta .
 \end{equation}

Define $$P=\left\{ n\in \N \mid \prod_{w\in \mathcal{W}_d}g_w(T^{(k+\sum_{i=1}^{|w|}w_i)n}\mathfrak{a})>\eta/2 \right\}. $$

By \cref{eq-25} we have that $P$ has lower Banach density greater than $\eta/2$. Let $R=r\cdot LCM(\{|k+\sum_{i=1}^{|w|}w_i |\}_{w\in \mathcal{W}_d})$. By Szemerédi's theorem (\cref{Uniform-Szemeredi}), we have that there is a constant $c>0$ only depending on $\eta$ such that
\begin{equation}\label{eq-26}
     \overline{d}\Big(\left\{m\in \N \mid  \overline{d}\Big((P+Rm)\cap\cdots (P+m)\cap P\cap  (P-m)\cap \cdots\cap (P-Rm)\Big)>c \right\}\Big) >c.
\end{equation}
Using the uniquely ergodicity of the orbit of $\mathfrak{a}^{\mathcal{W}_d}$ through the transformations $\Vec{T}$ and $\tilde{T}$ given by \cite[Theorem 17, Chapter 11]{Host_Kra_nilpotent_structures_ergodic_theory:2018} we get 
\begin{align*}
      & \uclim_{m}\int     \tilde{T}^m \Big( \bigotimes_{w\in \mathcal{W}_d} g_{w}\Big)(\mathfrak{a},  (x_{w^-})_{w\in \mathcal{W}_{d}^{+}})\cdot \Big(\bigotimes_{w\in \mathcal{W}_d}g_{w}  \Big)(x)    d\tilde{\sigma}(x) \\
&= \uclim_{n\to \infty} \uclim_{m\to \infty}  \Big(\bigotimes_{w\in \mathcal{W}_d}g_w\Big)\left(\tilde{T}^m \left(\mathfrak{a}, ( T^{n(k+\sum_{i=1}^{|w^-|}w_i)} \mathfrak{a})_{w\in \mathcal{W}_{d}^{+}}\right)\right)\cdot \Big(\bigotimes_{w\in \mathcal{W}_d}g_w\Big)(\vec{T}^n\mathfrak{a}^{\mathcal{W}_d} )\\
&= \uclim_{n\to \infty} \uclim_{m\to \infty} 
             \Big(\bigotimes_{w\in \mathcal{W}_d}g_w\Big)(\tilde{T}^m\Vec{T}^n \mathfrak{a}^{\mathcal{W}_d} )\cdot  \Big(\bigotimes_{w\in \mathcal{W}_d}g_w\Big)(\vec{T}^n\mathfrak{a}^{\mathcal{W}_d} ) .
\end{align*}
The last expression is bounded by below by
$$ \frac{1}{R} \uclim_{n\to \infty} \uclim_{m\to \infty} 
             \Big(\bigotimes_{w\in \mathcal{W}_d}g_w\Big)(\tilde{T}^{Rm}\Vec{T}^n \mathfrak{a}^{\mathcal{W}_d} )\cdot  \Big(\bigotimes_{w\in \mathcal{W}_d}g_w\Big)(\vec{T}^n\mathfrak{a}^{\mathcal{W}_d} ),$$
so it is enough to bound uniformly the expression
$$\frac{1}{R} \uclim_{n\to \infty} \uclim_{m\to \infty} 
           g_\varepsilon(T^{kn+kRm}\mathfrak{a})\cdot  \prod_{w\in \mathcal{W}_d^{+}} g_{w}(T^{(k+\sum_{i=1}^{|w|}w_i)n+w_{|w|}Rm}\mathfrak{a}) \cdot \prod_{w\in \mathcal{W}_d}g_w(T^{(k+\sum_{i=1}^{|w|}w_i)n}\mathfrak{a}).$$

For this, notice that for $n,m\in \N$ with $n\in (P+Rm)\cap\cdots (P+m)\cap P\cap  (P-m)\cap \cdots\cap (P-Rm)$, we have that $n+im\in P$ for each $i\in \{-R,...,R\}$. In particular, we have that $n+Rm\in P$ which implies that $g_\varepsilon(T^{kn+kRm}\mathfrak{a})>\eta/2$. Similarly, for $w\in\mathcal{W}_{d}^{+}$, we have that $n+\frac{w_{|w|}R}{(k+\sum_{i=1}^{|w|}w_i)}m\in P$, which implies that 

$$g_{w}(T^{(k+\sum_{i=1}^{|w|}w_i)(n+\frac{w_{|w|}R}{k+\sum_{i=1}^{|w|}w_i}m) }\mathfrak{a})= g_{wj}(T^{(k+\sum_{i=1}^{|w|}w_i)n+w_{|w|}Rm}\mathfrak{a})>\eta/2.$$

In addition, given that $n\in P$, we have that $g_j(T^{(k+j)n}a)>\eta/2$ for each $j\in \{-R,\ldots,R\}$. Consequently, we have that for such $n,m\in \N$, the following holds
\begin{equation}\label{eq-28}
       g_\varepsilon(T^{kn+kRm}\mathfrak{a})\cdot\prod_{w\in \mathcal{W}_d^{+}} g_{w}(T^{(k+\sum_{i=1}^{|w|}w_i)n+w_{|w|}Rm}\mathfrak{a})\cdot \prod_{w\in \mathcal{W}_d}g_w(T^{(k+\sum_{i=1}^{|w|}w_i)n}\mathfrak{a})>(\eta/2)^{2|\mathcal{W}_d|}.
\end{equation}
Therefore, by \cref{eq-26} and \cref{eq-28} allow to conclude that 
\begin{equation*}
     \uclim_{m}\int    \tilde{T}^m \Big(  \bigotimes_{w\in \mathcal{W}_d} g_{w}\Big)(\mathfrak{a}, (x_{w^-})_{w\in \mathcal{W}_{d}^{+}} )\cdot \Big(\bigotimes_{w\in \mathcal{W}_d}g_{w}  \Big)(x)   d\tilde{\sigma}(x) >(\eta/2)^{2|\mathcal{W}_d|}c^2R^{-1},
\end{equation*}
concluding. 
\end{proof}

Now we finally prove \cref{Lemma-new-method}.
\begin{proof}[Proof of \cref{Lemma-new-method}]
Let $(f_w)_{w\in \mathcal{W}_d}\subseteq C(X)$ be non-negative continuous functions bounded by $1$ without loss of generality, such that 
\begin{equation*}
         \eta':=\int  \bigotimes_{w\in\mathcal{W}_d} f_{w} d \sigma >0.
\end{equation*}
Pick $\eta\in (0,\eta'/2)  $ and notice that by definition of $\sigma$
\begin{equation*}
         \int \bigotimes_{w\in\mathcal{W}_d} \bE(f_{w}|Z)d\tilde{\sigma}= \int  \bigotimes_{w\in\mathcal{W}_d} f_{w} d\sigma>2\eta.
\end{equation*}
Let $0<\delta<\eta$ given by \cref{Theo-in-pronil} for $\eta$. We take $(g_w)_{w\in \mathcal{W}_d}\subseteq C(Z)$ to be non-negative functions bounded by $1$ such that $\norm{g_w-\E(f_w|Z)}_{L^2(Z)}\leq \delta/(2\cdot |\mathcal{W}_d|\cdot k)$ for each $w\in \mathcal{W}_d$ and thus
\begin{equation*}
       \int \bigotimes_{w\in\mathcal{W}_d} g_{w}d\tilde{\sigma}>\eta.
\end{equation*}

Hence, by \cref{Theo-in-pronil} there is $\delta>0$ such that 
\begin{equation}\label{uniform-bound-uclim}
     \uclim_{n}\int    \Big( \tilde{T}^n  \bigotimes_{w\in \mathcal{W}_d} g_{w}\Big)(\pi(a), (x_{w^-})_{w\in \mathcal{W}_d^{+}} )\cdot \Big(\bigotimes_{w\in \mathcal{W}_d}g_{w}  \Big)(x) d\tilde{\sigma}(x)  >\delta.
\end{equation}
Using \cref{first-approx} and dominated convergence theorem we have that 
\begin{align*}
    & \liminf_{N\to\infty}  \bE_{n\in \Phi_N}  T^{kn} f_\varepsilon(a) \int  \Big( (\bigotimes_{w\in \mathcal{W}_{d}\setminus\sA^d}  \prod_{\substack{j\in\sA\\wj\notin \sF}} T^{jn}f_{wj} )\otimes 1^{\sA^d\setminus \sF} \Big) \cdot  \bigotimes_{w\in \mathcal{W}_d}f_{w}d\sigma \\
    & =\liminf_{N\to\infty}  \bE_{n\in \Phi_N}  T^{kn} f_\varepsilon(a) \int  \Big( (\bigotimes_{w\in \mathcal{W}_{d}\setminus\sA^d}  \prod_{\substack{j\in\sA\\ wj\notin \sF}}T^{jn}\bE(f_{wj}|Z) )\otimes 1^{\sA^d\setminus \sF} \Big) \cdot \bigotimes_{w\in \mathcal{W}_d}f_{w}d\sigma \\
    & =\liminf_{N\to\infty}  \bE_{n\in \Phi_N}  T^{kn} f_\varepsilon(a) \int  \Big( (\bigotimes_{w\in \mathcal{W}_{d}\setminus\sA^d}  \prod_{\substack{j\in\sA\\ wj\notin \sF}}T^{jn}\bE(f_{wj}|Z) )\otimes 1^{\sA^d\setminus \sF} \Big) \cdot \bigotimes_{w\in \mathcal{W}_d}\bE(f_{w}|Z)d\tilde{\sigma} \\
    &\geq - \delta/2 + \liminf_{N\to\infty}  \bE_{n\in \Phi_N}  T^{kn} f_\varepsilon(a) \int  \Big( (\bigotimes_{w\in \mathcal{W}_{d}\setminus\sA^d}  \prod_{\substack{j\in\sA\\ wj\notin \sF}}T^{jn}g_{wj} )\otimes 1^{\sA^d\setminus \sF} \Big) \cdot \bigotimes_{w\in \mathcal{W}_d}g_{w}d\tilde{\sigma}
\end{align*}
where we used the definition of $\sigma$ on the third equality and the definition of $(g_w)_{w\in \mathcal{W}_d}$ on the last inequality. 

On the other hand, by the fact that $a\in gen_{T^k}(\mu_k,\Phi)$, we can use \cref{second-approx} with $(X,\mu_k,T^k)$, Fatou's lemma, and \cref{ergodic-decom-Tk} to get that  
\begin{align*}
    & \limsup_{N\to \infty} \Big|  \bE_{n\in \Phi_N}  T^{kn} (f_\varepsilon(a)-g_\varepsilon(\pi(a)))  \int  \Big( (\bigotimes_{w\in \mathcal{W}_{d}\setminus\sA^d}  \prod_{\substack{j\in \sA\\ wj\notin\sF}}T^{jn}g_{wj} )\otimes 1^{\sA^d\setminus \sF} \Big) \cdot  \bigotimes_{w\in \mathcal{W}_d}g_{w}d\tilde{\sigma}   \Big|\\
    &\leq \int  \limsup_{N\to \infty} \Big|  \bE_{n\in \Phi_N}  T^{kn} (f_\varepsilon(a)-g_\varepsilon(\pi(a)))  \Big( (\bigotimes_{w\in \mathcal{W}_{d}\setminus\sA^d}  \prod_{\substack{j\in \sA\\wj\notin \sF}}T^{jn}g_{wj} )\otimes 1^{\sA^d\setminus \sF} \Big)\Big| d\tilde{\sigma}    \\
    &\leq  \norm{\E(f_\varepsilon-g_\varepsilon\circ \pi | Z_k) }_{L^1(m_k)} \leq \ell\norm{\E(f_\varepsilon| Z)-g_\varepsilon }_{L^1(m)}  \leq  \ell\norm{\E(f_\varepsilon| Z)-g_\varepsilon }_{L^1(m)} \leq \delta/2
\end{align*}
in where in the second to last inequality we used that all the functions involved are $\mu$-a.e. bounded by $1$ and the fact that $g_\varepsilon$ is $Z$-measurable, and in the last inequality we used that $\ell\leq k$. By \cref{uniform-bound-uclim} we deduce that
\begin{align*}
    & \liminf_{N\to \infty}  \bE_{n\in \Phi_N}  \int    \Big(  \tilde{T}^n \bigotimes_{w\in \mathcal{W}_d} f_{w}\Big)(a, (x_{w^-})_{w\in \mathcal{W}_d^{+}} )\cdot \Big(\bigotimes_{w\in \mathcal{W}_d}f_{w}  \Big)(x) d\tilde{\sigma}(x)  \\
    &\geq -\delta+  \uclim_{N\to\infty}   \int     \Big(  \tilde{T}^n\bigotimes_{w\in \mathcal{W}_d} g_{w}\Big)(\pi(a), (x_{w^-})_{w\in \mathcal{W}_d^{+}} )\cdot \Big(\bigotimes_{w\in \mathcal{W}_d}g_{w}  \Big)(x) d\tilde{\sigma}(x)  >0,
\end{align*}
concluding the proof.
\end{proof}

\small{
\bibliographystyle{aomalpha}
\bibliography{refs}
}

\bigskip
\noindent
Felipe Hernández\\
\textsc{{\'E}cole Polytechnique F{\'e}d{\'e}rale de Lausanne} (EPFL)\par\nopagebreak
\noindent
\href{mailto:felipe.hernandezcastro@epfl.ch}
{\texttt{felipe.hernandezcastro@epfl.ch}}

\end{document}